\definecolor{deepblue}{rgb}{0.07, 0.04, 0.56}
\definecolor{mypink3}{cmyk}{0.7, 0.3, 0, 0.3}
\definecolor{myred}{rgb}{0.6, 0.05, 0.11}
\newtheorem{theorem}{Theorem}
\newtheorem{proposition}[theorem]{Proposition}
\newtheorem{lemma}[theorem]{Lemma}
\newtheorem{corollary}[theorem]{Corollary}
\newtheorem{definition}[theorem]{Definition}
\newtheorem{remark}[theorem]{Remark}
\let\OLDthebibliography\thebibliography
\renewcommand\thebibliography[1]{
	\OLDthebibliography{#1}
	\setlength{\parskip}{1pt}
	\setlength{\itemsep}{1pt plus 0.3ex}
}
\numberwithin{equation}{section} 
\numberwithin{theorem}{section}
\DeclarePairedDelimiter\norm{\lVert}{\rVert}%
\let\oldnorm\norm
\def\norm{\@ifstar{\oldnorm}{\oldnorm*}}
\DeclareMathOperator*{\essinf}{ess\,inf}
\newcommand{\al} {\alpha}
\newcommand{\pa} {\partial}
\newcommand{\be} {\beta}
\newcommand{\de} {\delta}
\newcommand{\ga} {\gamma}
\newcommand{\Om} {\Omega}
\newcommand{\la} {\lambda}
\newcommand{\La} {\Lambda}
\newcommand{\si} {\sigma}
\newcommand{\noi} {\noindent}
\newcommand{\ra} {\rightarrow}
\newcommand{\intRn}{\displaystyle{\int_{\mathbb{R}^N}}}
\newcommand\restr[2]{{% we make the whole thing an ordinary symbol
  \left.\kern-\nulldelimiterspace % automatically resize the bar with \right
  #1 % the function
 % \vphantom{\big|} % pretend it's a little taller at normal size
  \right|_{#2} % this is the delimiter
  }}
\def\B{{\mathcal{B}}}
\def\E{{\mathcal E}}
\def\S{\mathbb{S}}
\def\R{{\mathbb R}}
\def\RN{{\mathbb R}^N}
\def\({{\Big(}}
\def\){{\Big)}}
\def\into{{\displaystyle \int_{\Omega}}}
\def\p{{p^{\prime}}}
\def\dx{{\rm d}x}
\def\dy{{\rm d}y}
\def\flap{{(-\Delta)^s}}
\def\uxy{{\frac{u(x)-u(y)}{|x-y|^{N+2s}}}}
 \def\uxys{{\frac{|u(x)-u(y)|^2}{|x-y|^{N+2s}}}}
\def\cns{{C_{N,s}}}
\def\Hsrn{{H^s(\mathbb{R}^N)}}
\def\uss{{[u]_{s}^2}}
\def\xsom{{H^s_0(\Omega)}}
\def\lmgv{{\la_1^{\text{min}}(g_0,V_0)}}
\def \ug {{\underline{g}}}
\def \uv {{\underline{V}}}
\def\H{{\mathcal{H}}}
\date{}
\begin{document}
 \title[Optimization of the first weighted eigenvalue of the fractional Laplacian]{On the optimization of the first weighted eigenvalue of the fractional Laplacian}

\author[Mrityunjoy Ghosh]{Mrityunjoy Ghosh}

\address{Tata Institute of Fundamental Research,
Centre for Applicable
Mathematics\\
Post Bag No. 6503, Sharadanagar,
Bangalore 560065, India.}

\email{ghoshmrityunjoy22@gmail.com}

 \subjclass[2020]{35B06, 49J30, 35P15, 35R11, 35Q93.}

 \keywords{Composite membrane problem, Fractional Laplacian, First eigenvalue, Polarization, Steiner symmetry, Foliated Schwarz symmetry.}

\maketitle
\begin{abstract}
In this article, we consider the minimization problem for the first eigenvalue of the fractional Laplacian with respect to the weight functions lying in the rearrangement classes of fixed weight functions. We prove the existence of minimizing weights in the rearrangement classes of weight functions satisfying some assumptions. Also, we provide characterizations of these minimizing weights in terms of the eigenfunctions. Furthermore, we establish various qualitative properties, such as Steiner symmetry, radial symmetry, foliated Schwarz symmetry, etc., of the minimizing weights and corresponding eigenfunctions.
\end{abstract}

\tableofcontents

\section{Introduction and statements of the main results}\label{intro}

Let $\Om\subset \R^N$ $(N\geq 2)$ be a smooth, bounded domain and $s\in (0,1).$ We consider the following weighted eigenvalue problem for the fractional Laplacian on $\Om$:
\begin{equation}\tag{E}\label{Problem}
	\left. \begin{aligned}
	\flap u +Vu &=\lambda gu \quad\text{in} \;\Omega,\\
	u &=0 \quad\; \;\;\;\text{in}\; \Om^c,
	\end{aligned}\right\}
\end{equation}
where $\la\in \R$, $V,g$ lie in some appropriate Lebesgue spaces, $\Om^c:=\RN\setminus\Om,$ and $\flap$ is the fractional Laplacian defined as below 
\begin{equation}\label{frac_lap}
    \flap u(x)=\cns\text{P.V.}\intRn\uxy \dy.
\end{equation}
Here, $\cns$ is a normalization constant and P.V. stands for the Cauchy principal value; cf. \cite{Valdinoci2012}. Let $\Hsrn, \xsom$ be the fractional Sobolev spaces (see Section \ref{Frac_sp} for more details). 
A real number $\la$ is called an eigenvalue of \eqref{Problem} if $\exists\; u\in \xsom\setminus\{0\}$ such that the following weak formulation holds:
\begin{align*}
    \frac{\cns}{2}\intRn\intRn \frac{(u(x)-u(y))(\phi(x)-\phi(y))}{|x-y|^{N+2s}}\dx\dy &+ \into u(x)\phi(x)V(x)\dx\\
    &=\la\into g(x)u(x)\phi(x)\dx,\;\forall\;\phi\in \xsom.
\end{align*}
For $u\in \Hsrn,$ let 
\begin{equation}\label{Rayleigh}
[u]_s:=\left(\intRn\intRn\uxys\dx\dy\right)^{\frac{1}{2}}.
\end{equation}
We define
\begin{equation}\label{variational}
    \la_1(g,V)=\inf\left\{ \frac{\frac{\cns}{2}\uss+\into V(x)|u(x)|^2\dx}{\into g(x)|u(x)|^2\dx} : u\in \xsom,\;\into g(x)|u(x)|^2\dx>0\right\}.
\end{equation}
Now, if $\la_1(g,V)$ is attained for some $u\in \xsom,$ then we call $\la_1(g,V)$ as the first eigenvalue and $u$ as the first eigenfunction of \eqref{Problem}.

 For $q>\frac{N}{2s}$ (observe that $N\geq 2s,$ for $s\in (0,1)$), let $C$ is the best constant of the fractional Sobolev embedding $\xsom \hookrightarrow L^{2q'}(\Om)$,
where $q'=\frac{q}{q-1}$ is the conjugate exponent of $q.$ We choose $g$ and $V$ satisfying the following:
\begin{equation}\tag{A}\label{assumption}
\left\{\begin{aligned}
    & \text{(A1)}\;\;\;g,V\in L^q(\Om),\;\text{where}\;q>\frac{N}{2s},\\
    & \text{(A2)}\;\;\;g^{+}\not\equiv 0,\;||V^{-}||_q\leq \frac{\frac{\cns}{2}-\de}{C^2},\;\text{for some}\;\de\in \left(0,\frac{\cns}{2}\right),
\end{aligned}\right.
\end{equation}
where $h^+=\max\{h,0\}$ and $h^-=\max\{-h,0\}$.

For $g=1,$ Del Pezzo, Bonder and Rios \cite[Theorem 3.3]{Bonder2018} proved the existence of the first eigenvalue $\la_1(g,V)$ of \eqref{Problem}. Furthermore, they showed that $\la_1(g,V)$ is a simple eigenvalue and the eigenfunctions associated to $\la_1(g,V)$ can be chosen to be positive; cf. \cite[Lemma 3.1]{Bonder2018}. The similar results have been obtained in \cite[Lemma 3.1]{LeeLee} for $V=0$ and nonnegative $g\in L^{\infty}(\Om)$ (see \cite{Cuesta2023} for sign-changing case). Indeed, for $g,V$ as in \eqref{assumption}, we can easily prove (see Theorem \ref{Eigen_exist} for an outline) that $\la_1(g,V)$ is achieved, i.e.,  $\la_1(g,V)$ is the first eigenvalue of \eqref{Problem}. Moreover, it is positive and simple. 

 Let $\E(\phi)$ denotes the rearrangement class of a measurable function $\phi:\Om\ra\R$, i.e., $\E(\phi)$ is the set of all measurable functions $\psi:\Om\ra\R$ satisfying 
 \begin{equation}\label{Rearrange_def}
 |\{x\in \Om: \psi(x)>t\}|=|\{x\in \Om: \phi(x)>t\}|,\;\forall\;t\in \R.
 \end{equation}
 Let $g_0,V_0$ satisfy \eqref{assumption}. In this article, we are interested in the following minimization problem:
\begin{equation}\tag{\text{Min}}\label{min_prob}
    \la_1^{\text{min}}(g_0,V_0):=\min\{\la_1(g,V): g\in \E(g_0), V\in \E(V_0)\}.
\end{equation}

In this article, we study the existence of minimizing weights for the minimization problem \eqref{min_prob}. Furthermore, we establish various symmetries of the minimizing weights and the corresponding eigenfunctions of \eqref{Problem} when the underlying domain has some symmetry. 

For the local case, optimization of the first eigenvalue of the Laplacian (or more generally for $p$-Laplacian and other differential operators) with respect to different types of weight functions has been extensively studied over the past few decades and it is commonly known as the \textit{composite membrane problem}. To quote a few, we refer to \cite{Chanillo, Ashbaugh, Cox, Bonder2006, Bonder2010, Radulescu2010, CuccuF, BonderDIE, Hamel, Kao, Berchio, Lamb}; see \cite[Chapter 9]{Henrot2006} also and the references therein for an overview of such problems. However, this provides a non-exhausting list of articles where similar problems are considered. 

\subsection{Existence of minimizing weights}

For $g_0=1$ and $V_0\in L^q(\Om),$ where $q\in (\frac{N}{2s},\infty),$ in \cite[Theorem 5.1]{Bonder2018}, the authors showed that there exists $\uv\in \E(V_0)$ such that $\lmgv=\la_1(g_0,\uv)$ (notice that, $\E(g_0)=\{g_0\}$ for $g_0=1$). In \cite{LeeLee}, Gonz\'{a}lez, Lee and Lee studied a particular case when $g_0=1$ and $V_0=\al\chi_D,$ for some $\al>0$ and  $D\subset\Om$. Along with several other qualitative properties, they proved the existence of a minimizing weight $\uv,$ which again takes the form $\uv=\al \chi_{\underline{D}}$, for some $\underline{D}\subset \Om$ having same Lebesgue measure as $D;$ cf. \cite[Theorem 1.1]{LeeLee}. By employing standard compactness arguments, we establish that there exists minimizing weights in \eqref{min_prob} even for more general weight functions $g_0,V_0$ satisfying \eqref{assumption}. This is the content of our first main result stated below.

\begin{theorem}\label{min_theo}
    Let $g_0,V_0$ be as given in \eqref{assumption} and $\normalfont{\lmgv}$ be as defined in \eqref{min_prob}. Then there exists $\ug\in \E(g_0)$ and $\uv\in \E(V_0)$ such that $\normalfont{\lmgv}$ is achieved at $\ug,\uv,$ i.e.,
    $$\normalfont{\lmgv}=\la_1(\ug,\uv).$$
\end{theorem}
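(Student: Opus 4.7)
The plan is to follow the standard direct method for rearrangement-class optimization: first extract a minimizing sequence of weights together with the associated first eigenfunctions, pass to weak limits, and then correct the (possibly rearrangement-leaving) weak limits back into $\E(g_0)$ and $\E(V_0)$ via Burton's rearrangement theorem, which guarantees that any continuous linear functional on $L^q$ attains its extrema over the weak $L^q$-closure of a rearrangement class at a genuine element of that class.

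To begin, choose $(g_n,V_n)\in\E(g_0)\times\E(V_0)$ with $\la_1(g_n,V_n)\to\lmgv$, and let $u_n\in\xsom$ be the associated first eigenfunctions (whose existence is guaranteed by Theorem~\ref{Eigen_exist}), normalized by $\into g_n u_n^2\,\dx=1$. Since every element of $\E(\phi)$ shares the $L^q$-norm of $\phi$, the sequences $(g_n),(V_n)$ are bounded in $L^q(\Om)$, so up to a subsequence $g_n\wra g_*$ and $V_n\wra V_*$ weakly in $L^q(\Om)$ for some $g_*,V_*$ in the respective weak $L^q$-closures. The normalization gives $\frac{\cns}{2}[u_n]_s^2+\into V_n u_n^2\,\dx=\la_1(g_n,V_n)$, and combining this with the estimate $\into V_n u_n^2\,\dx\geq -\|V_n^-\|_q\|u_n\|_{L^{2q'}}^2\geq -\left(\tfrac{\cns}{2}-\de\right)[u_n]_s^2$, which follows from assumption (A2) and the fractional Sobolev embedding with best constant $C$, yields $\de[u_n]_s^2\leq\la_1(g_n,V_n)$. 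Thus $(u_n)$ is bounded in $\xsom$, so a further subsequence satisfies $u_n\wra u$ in $\xsom$ and, by the compact embedding $\xsom\embd L^{2q'}(\Om)$, $u_n\to u$ strongly in $L^{2q'}(\Om)$, whence $u_n^2\to u^2$ in $L^{q'}(\Om)$.

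I then pass to the limit in the Rayleigh quotient. Weak--strong duality gives $\into V_n u_n^2\,\dx\to\into V_* u^2\,\dx$ and $\into g_n u_n^2\,\dx\to\into g_* u^2\,\dx=1$, while weak lower semicontinuity of the Gagliardo seminorm yields $[u]_s^2\leq\liminf[u_n]_s^2$. Together these imply
\[
\frac{\frac{\cns}{2}[u]_s^2+\into V_* u^2\,\dx}{\into g_* u^2\,\dx}\;\leq\;\lmgv,
\]
and the denominator equals $1$, so in particular $u\not\equiv 0$.

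Finally, to restore the rearrangement-class membership, I invoke Burton's theorem on weak closures of rearrangement classes: there exist $\ug\in\E(g_0)$ and $\uv\in\E(V_0)$, of the form $\ug=\varphi(u^2)$ with $\varphi$ nondecreasing and $\uv=\psi(u^2)$ with $\psi$ nonincreasing, realizing the maximum of $g\mapsto\into g u^2\,\dx$ over the weak $L^q$-closure of $\E(g_0)$ and the minimum of $V\mapsto\into V u^2\,\dx$ over that of $\E(V_0)$, respectively. In particular $\into\ug u^2\,\dx\geq\into g_* u^2\,\dx>0$ and $\into\uv u^2\,\dx\leq\into V_* u^2\,\dx$, and the new numerator $\frac{\cns}{2}[u]_s^2+\into\uv u^2\,\dx\geq\de[u]_s^2>0$ by the same coercivity bound, so the Rayleigh quotient evaluated at $u$ with weights $(\ug,\uv)$ is bounded above by the left-hand side of the preceding display, i.e., by $\lmgv$. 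This shows $\la_1(\ug,\uv)\leq\lmgv$, and the opposite inequality is immediate from the definition of $\lmgv$. The principal obstacle is precisely that rearrangement classes in $L^q$ are not weakly closed, so weak compactness alone cannot keep the limiting weights inside $\E(g_0),\E(V_0)$; Burton's theorem is the essential tool that converts the weak limit into a genuine rearrangement without worsening the Rayleigh quotient.
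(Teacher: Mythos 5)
Your proposal is correct and follows essentially the same route as the paper: a minimizing sequence of weights with normalized first eigenfunctions, the coercivity bound from (A2) to get boundedness of $(u_n)$ in $\xsom$, compact embedding plus weak lower semicontinuity of the Gagliardo seminorm to pass to the limit, and Burton's theorem on weak closures of rearrangement classes to replace the weak limits $g_*,V_*$ by genuine elements $\ug\in\E(g_0)$, $\uv\in\E(V_0)$ without increasing the Rayleigh quotient. The only cosmetic difference is that you additionally record the monotone-composition form of the Burton extremizers, which the paper defers to Proposition \ref{Repre}.
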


\subsection{Symmetry of minimizing weights} 
Let $\Om\subset \R^N$ be Steiner symmetric with respect to $\{x_1=0\}$, $g_0=1$ and $V_0=\al\chi_D,$ where $\al>0$ and $D\subset \Om.$ For such $g_0$ and $V_0,$ in \cite[Theorem 1.2]{LeeLee}, the authors showed that the minimizing weights (which have essentially the similar form as $V_0$) are symmetric with respect to $\{x_1=0\}$ and the corresponding first eigenfunctions are Steiner symmetric with respect to $\{x_1=0\}$. In the below theorem, we prove that the minimizing weights in \eqref{min_prob} and the associated first eigenfunctions of \eqref{Problem} inherit similar properties even for more general weight functions $g_0$.

\begin{theorem}[Steiner symmetry]\label{steiner_theo}
     Let $V_0=0$ and $\ug$ be a minimizing weight given by Theorem \ref{min_theo}. Let $u$ be a first eigenfunction of \eqref{Problem} associated to $\ug.$ Assume that $\Om\subset \R^N$ is Steiner symmetric with respect to the hyperplane $\pa H,$ where $H$ is an open affine half-space in $\R^N$ such that $0\in \overline{H}$. Then $u,\ug$ are Steiner symmetric in $\Om$ with respect to $\pa H$.
\end{theorem}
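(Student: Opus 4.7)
The plan is to use polarization (two-point rearrangement) across the hyperplane $\pa H$. Let $\si_H$ denote reflection across $\pa H$, and for a measurable $w\colon\RN\ra\R$ define its polarization with respect to $H$ by
\[
w^H(x)=\begin{cases}\max\{w(x),w(\si_H x)\},& x\in H,\\ \min\{w(x),w(\si_H x)\},& x\in \si_H(H).\end{cases}
\]
Since $\Om$ is Steiner symmetric across $\pa H$, $\si_H(\Om)=\Om$, so both $u^H$ and $\ug^H$ remain supported in $\overline{\Om}$; moreover polarization preserves distribution functions, hence $\ug^H\in \E(g_0)$. By the simplicity of $\la_1(\ug,0)$ and positivity of the first eigenfunction noted after \eqref{assumption}, we may take $u>0$ in $\Om$.

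Two polarization inequalities drive the argument:
\[
[u^H]_s \leq [u]_s \quad \text{and} \quad \into \ug\, u^2\dx \leq \into \ug^H(u^H)^2\dx.
\]
The first is the standard non-increase of the fractional Gagliardo seminorm under polarization, proved by splitting $\RN\times\RN$ into the four regions determined by $H$ and $\si_H(H)$, applying a two-point rearrangement identity on each orbit $\{x,\si_H x\}\times\{y,\si_H y\}$, and using the geometric fact $|x-y|\leq|x-\si_H y|$ whenever $x,y\in H$. The second is a Hardy--Littlewood-type polarization inequality that is established orbit-by-orbit using only $u\geq 0$ (the sign of $\ug$ is irrelevant in this step).

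Using $u^H\in \xsom$ as a trial function in \eqref{variational} for $\la_1(\ug^H,0)$ (since $V_0=0$ we have $\E(V_0)=\{0\}$), the Rayleigh quotient satisfies
\[
\la_1(\ug^H,0) \leq \frac{\frac{\cns}{2}[u^H]_s^2}{\into \ug^H(u^H)^2\dx} \leq \frac{\frac{\cns}{2}[u]_s^2}{\into \ug\, u^2\dx} = \la_1(\ug,0) = \lmgv.
\]
Since $\ug^H\in \E(g_0)$, the minimality of $\lmgv$ forces equality throughout.

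The main obstacle is the rigidity analysis. Equality $[u^H]_s=[u]_s$ makes the underlying two-point inequality an equality on almost every orbit; combined with $u>0$ in $\Om$ and the orientation of $H$ fixed by $0\in \overline{H}$ (which rules out the complementary case $u=u^H\circ\si_H$), this forces $u=u^H$ in $\RN$, delivering the Steiner symmetry of $u$ across $\pa H$. Finally, equality in the Hardy--Littlewood polarization, combined with the strict ordering across $\pa H$ established for $u$, forces $\ug$ to be similarly ordered with $u^2$ on each orbit, hence $\ug=\ug^H$ a.e.\ in $\Om$; thus $\ug$ is also Steiner symmetric across $\pa H$, completing the proof.
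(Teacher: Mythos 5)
There is a genuine gap, and it is structural: you polarize only across the single half-space $H$ whose boundary is the symmetry hyperplane, but invariance under that one polarization cannot yield Steiner symmetry. The identity $u=u_H$ only says $u(x)\ge u(\si_H x)$ for $x\in\Om\cap H$; it gives neither reflection symmetry across $\pa H$ nor the monotone (level-sets-are-centered-intervals) structure that Steiner symmetry requires. The paper's Proposition \ref{Steiner_char} (Brock--Solynin) characterizes Steiner symmetry as invariance under polarization with respect to the \emph{whole family} $\H_*$ of half-spaces $P$ with $\pa P$ parallel to $\pa H$ and $P\supset \pa H$, and the proof must run the minimality--polarization argument for every such $P$. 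Moreover, your mechanism for resolving the dichotomy fails precisely for the one half-space you use: since $\Om$ is symmetric about $\pa H$, i.e.\ $\si_H(\Om)=\Om$, the rigidity analysis (Proposition \ref{pola_propo}-$(ii)$) only yields ``either $u_H=u$ or $u^H=u$,'' and both alternatives genuinely occur; the condition $0\in\overline H$ does not rule out the complementary case (this indeterminacy is exactly why Theorem \ref{foliated_theo}-$(i)$ can only assert symmetry with respect to \emph{some} direction $\be$). By contrast, for $P\in\H_*$ with $\pa P\ne\pa H$ one has $\si_P(\Om)\ne\Om$, and then Lemma \ref{Pola_domain}-$(ii)$ produces a set $A\subset\Om\cap P$ of positive measure with $\si_P(A)\subset\Om^c$, so $u\circ\si_P=0<u$ on $A$; this is what forces the alternative $u_P>u$ in $\Om\cap P$ to fail and gives $u_P=u$ unambiguously.

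A secondary issue: your rigidity step from equality of the Gagliardo seminorms is only orbit-wise, and orbit-wise equality does not globalize to ``$u=u_H$ everywhere'' without an additional connectedness argument. The paper instead writes $w=u_P-u$, notes $w\ge 0$ in $P$ and $w$ antisymmetric, shows $\flap w\ge 0$ weakly in $\Om\cap P$ (using $\ug\ge 0$ here — your remark that the sign of $\ug$ is irrelevant is not quite right at this stage), and applies the antisymmetric strong maximum principle of Jarohs--Weth (Proposition \ref{Strong_max2}) to obtain the clean dichotomy $w\equiv 0$ or $w>0$ in $\Om\cap P$. To repair your proof you would need to (a) replace $H$ by the family $\H_*$ and invoke Proposition \ref{Steiner_char}, and (b) replace the orbit-wise equality discussion by the maximum-principle dichotomy together with the measure-theoretic argument that exploits $\si_P(\Om)\ne\Om$.
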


Following similar ideas as in the local case \cite{Chanillo, Anedda}, to prove \cite[Theorem 1.2]{LeeLee}, the authors used the Steiner symmetrization along with approximating the kernel for proving the equality case.  However, as observed recently by Biswas, Das and the author \cite[Section 1.2]{UNM} in the local case, the Steiner symmetrization and the characterization of the equality case of P\'{o}lya-Szeg\"{o} type inequality are not helpful to prove the Steiner symmetry when the weight functions are not of the form $\chi_D$ (for some $D\subset \Om$). We stress that similar difficulties occur in the non-local case, too. Now, adapting the approach of \cite{UNM} to the non-local setting, we overcome this difficulty by proving the invariant structure of the minimizing weights and the corresponding first eigenfunctions under $polarization$ (see Section \ref{pola_sec} for a precise definition). More explicitly, we first show that if the domain $\Om$ is invariant with respect to polarization, then the same is propagated into the minimizing weights and the corresponding eigenfunctions also (Proposition \ref{pola_propo}). Next, we use a characterization of the Steiner symmetry in terms of the polarization to conclude the result. 

Now, as a direct consequence of the Steiner symmetry (Theorem \ref{steiner_theo}), we prove the radiality of the minimizing weights and the associated first eigenfunctions in a ball.
\begin{corollary}[Radial symmetry]\label{rad_theo}
Let $\Om=B_r\subset \R^N$ be an open ball of radius $r>0.$ Further, assume that $V_0$, $\ug$ and $u$ be as in Theorem \ref{steiner_theo}. Then $u$ and $\ug$ are radial in $\Om.$
\end{corollary}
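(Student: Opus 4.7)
The plan is to deduce this as an immediate consequence of Theorem \ref{steiner_theo} applied to every hyperplane through the center of the ball. The key geometric input is that a ball is Steiner symmetric with respect to every hyperplane passing through its center, so Theorem \ref{steiner_theo} can be invoked in every direction simultaneously.

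After translating coordinates, which is permissible because the weak formulation of \eqref{Problem} is translation-invariant, I would assume without loss of generality that $B_r$ is centered at the origin. For each $e \in \S^{N-1}$, set $H_e := \{x\in \RN : x\cdot e > 0\}$. This is an open affine half-space with $0 \in \pa H_e \subset \overline{H_e}$, and $\Om = B_r$ is clearly Steiner symmetric with respect to the hyperplane $\pa H_e$. Thus the hypotheses of Theorem \ref{steiner_theo} are met, and it yields that $u$ and $\ug$ are Steiner symmetric in $\Om$ with respect to $\pa H_e$; in particular, both are invariant under the reflection across $\pa H_e$.

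Since $e \in \S^{N-1}$ was arbitrary, $u$ and $\ug$ are invariant under every reflection across a hyperplane through the origin. The subgroup of $O(N)$ generated by such reflections is the full orthogonal group, so $u$ and $\ug$ are $O(N)$-invariant on $B_r$, i.e., radial. The only non-routine ingredient is Theorem \ref{steiner_theo} itself; with that in hand, the corollary is essentially a bookkeeping exercise, and I do not anticipate any real obstacle beyond ensuring the translation step is formulated cleanly.
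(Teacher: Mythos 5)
Your proposal is correct and follows essentially the same route as the paper: apply Theorem \ref{steiner_theo} to every hyperplane through the center (the class $\widehat{\H}_0$ in the paper's notation) to get reflection symmetry in all directions, and conclude radiality. Your explicit remark that these reflections generate $O(N)$ just spells out the paper's final "hence the radiality follows."
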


So far, we have encountered situations where the minimizing weights and the associated eigenfunctions inherit the symmetry of the underlying domain. However, it need not be the case always. To describe the symmetry-breaking phenomena, we consider the annular domain as follows: for $0<r<R,$ 
\begin{equation}\label{annulus}
    \B_t=B_R(0)\setminus \overline{B_r(te_1)},\;\text{for}\;0\leq t<R-r,
\end{equation} 
where $B_s(x)$ denotes the ball of radius $s$ centered at $x$ and $e_1=(1,0,\dots,0)\in \R^N.$ In the local case, symmetry-breaking was first observed by Chanillo, Grieser, Imai, Kurata, and Ohnishi in \cite[Theorem 6]{Chanillo}. For $g_0=1$ and $V_0=\al\chi_D$ $(D\subset \Om),$ they showed that there exists a thin annular domain $\B_0$ (for sufficiently large $r$) such that the minimizing weights are not radial in $\B_0.$ As a consequence of this asymmetry, it is easy to observe that the minimizing weight is not unique. A similar phenomenon holds in the non-local case, too, as shown by Gonz\'{a}lez, Lee and Lee in \cite[Theorem 1.4]{LeeLee} (see Remark \ref{asym} also). Motivated by these asymmetry results, it is, therefore, natural to look for some ``weaker'' symmetry of the minimizing weights and the associated eigenfunctions on such annular domains. In this direction, to the best of our knowledge, the first result appeared recently in \cite[Theorem 1.5]{UNM} for the local case, where the authors proved that the minimizing weights and the associated first eigenfunctions are foliated Schwarz symmetric (see Definition \ref{Foliated_symmetry} for precise definition) in $\B_t.$ In the following, we establish analogous results for the minimizers in \eqref{min_prob} and the associated first eigenfunctions of \eqref{Problem}.

\begin{theorem}[Foliated Schwarz symmetry]\label{foliated_theo}
    Suppose $\B_t$ is as defined by \eqref{annulus} and $g_0,V_0\geq 0$ are as mentioned in Theorem \ref{min_theo}. Let $\ug,\uv$ be the minimizing weights given by Theorem \ref{min_theo} and $u$ be an associated first eigenfunction of \eqref{Problem} on $\B_t.$ Then the following statements hold:
    \begin{enumerate}[(i)]
        \item if $t=0,$ then $u,\ug$ are foliated Schwarz symmetric in $\B_t$ with respect to some vector $\be\in \S^{N-1}$, while $\uv$ is foliated Schwarz symmetric in $\B_t$ with respect to $-\be.$

        \item if $t>0$ and $V_0=0,$ then $u,\ug$ are foliated Schwarz symmetric in $\B_t$ with respect to $-e_1$.
    \end{enumerate}
\end{theorem}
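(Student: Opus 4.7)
The plan is to combine the polarization invariance of minimizers given by Proposition~\ref{pola_propo} with a foliated Schwarz rearrangement argument, treating the two cases separately.

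In case (i), with $t=0$, the annulus $\B_0=B_R(0)\setminus\overline{B_r(0)}$ is invariant under every reflection $\sigma_H$ through a hyperplane $\partial H$ containing the origin. Fixing such an $H$, Proposition~\ref{pola_propo} applied to the minimizing triple $(u,\ug,\uv)$ produces a polarization identity: either $(u,\ug,\uv)=(u^H,\ug^H,\uv_H)$ or the reversed triple in which $u$ and $\ug$ coincide with their reverse polarizations while $\uv$ coincides with its polarization. The reversal for $\uv$ reflects the positive sign of $V$ in the numerator of the Rayleigh quotient, so that minimizing $\int \uv u^2$ pushes $\uv$ away from the concentration region of $u^2$. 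Since this dichotomy holds for \emph{every} half-space $H$ through $0$, I would invoke a Smets--Willem-type characterization of foliated Schwarz symmetry: a nonnegative continuous function $w$ on $\B_0$ is foliated Schwarz symmetric with respect to some $\be\in\S^{N-1}$ iff, for every $H$ with $0\in\partial H$, either $w^H=w$ or $w^H=w\circ\sigma_H$. Applied to $u$ and $\ug$ this picks out a common direction $\be$; applied to $\uv$ with the reversed polarization identity, it picks out the opposite direction $-\be$.

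In case (ii), with $t>0$ and $V_0=0$ (so $\uv=0$), only polarizations with $\partial H\supset e_1$-axis preserve $\B_t$, because $\sigma_H$ must simultaneously preserve $B_R(0)$ and $B_r(te_1)$, forcing $\partial H$ to pass through both $0$ and $te_1$. This family is too narrow for the Smets--Willem characterization to single out a direction, so I instead run a direct foliated Schwarz rearrangement toward $-e_1$. The key geometric observation is that for every $\rho>0$ the slice $S_\rho\cap\B_t$ is already a spherical cap around $-e_1$: the constraint $|x-te_1|>r$ on $|x|=\rho$ rewrites as $x_1<(\rho^2+t^2-r^2)/(2t)$. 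Hence the slicewise symmetrizations $u^{\#}_{-e_1}$ and $\ug^{\#}_{-e_1}$ remain supported in $\B_t$, so the pair $(u^{\#}_{-e_1},\ug^{\#}_{-e_1})$ is again admissible. The non-local P\'olya--Szeg\H{o} inequality gives $[u^{\#}_{-e_1}]_s\leq [u]_s$ and the slicewise Hardy--Littlewood inequality gives $\int \ug u^2\leq \int \ug^{\#}_{-e_1}(u^{\#}_{-e_1})^2$, so
$$\la_1(\ug^{\#}_{-e_1},0)\leq \la_1(\ug,0)=\lmgv,$$
and hence $(u^{\#}_{-e_1},\ug^{\#}_{-e_1})$ is also a minimizing pair. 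The simplicity of the first eigenvalue together with the rigidity/equality case of these rearrangement inequalities then forces $u=u^{\#}_{-e_1}$ and $\ug=\ug^{\#}_{-e_1}$.

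The hardest step is the rigidity analysis in case (ii). The polarizations preserving $\B_t$ only yield axial symmetry about the $e_1$-axis and cannot by themselves detect the monotone alignment of $u$ and $\ug$ toward $-e_1$; that information must come from the equality case of the non-local P\'olya--Szeg\H{o} (or Hardy--Littlewood) inequality under foliated Schwarz rearrangement. In the fractional setting this rigidity is considerably more delicate than in the local case because the Gagliardo seminorm is non-local and sees the global structure of $u$; one must argue carefully that strict inequality propagates unless $u$ is already the symmetrized function.
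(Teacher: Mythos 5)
Your case (i) follows essentially the paper's route (polarization dichotomy from Proposition \ref{pola_propo}-$(ii)$ plus the Brock/Smets--Willem characterization, Proposition \ref{Foliated_char}), but you gloss over the one step that actually couples the three directions: applying the characterization separately to $u$, $\ug$ and $\uv$ only yields \emph{some} symmetry direction for each, with no a priori relation between them. The paper first fixes $\beta$ for $u$, so that $u_H=u$ for all $H\in\widehat{\H}_0(\beta)$, and then transfers this to $\ug_H=\ug$ and $\uv^H=\uv$ via the monotone representations $\ug=\phi\circ u$ ($\phi$ increasing) and $\uv=\psi\circ u$ ($\psi$ decreasing) of Proposition \ref{Repre}. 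Without that (or an equivalent coupling), "a common direction $\beta$ for $u,\ug$ and the opposite direction for $\uv$" is asserted, not proved. This gap is fillable, but it must be filled.

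Case (ii) contains a genuine error that derails the argument. You claim that only half-spaces $H$ with $\pa H$ containing the $e_1$-axis preserve $\B_t$, "because $\sigma_H$ must simultaneously preserve $B_R(0)$ and $B_r(te_1)$." That is the condition for \emph{reflection} invariance $\sigma_H(\B_t)=\B_t$, not for \emph{polarization} invariance $(\B_t)_H=\B_t$. In fact $(\B_t)_H=\B_t$ for every $H\in\widehat{\H}_0(-e_1)$: for such $H$ one has $te_1\in\overline{H}^c$, and the elementary inequality $|x-\sigma_H(y)|\le|x-y|$ for $x\in H$, $y\in\overline{H}^c$ shows the hole $\overline{B_r(te_1)}$ polarizes into itself, while $B_R(0)$ is trivially invariant. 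Since moreover $\sigma_H(\B_t)\neq\B_t$ for these $H$, Proposition \ref{pola_propo}-$(i)$ applies directly and gives $u_H=u$, $\ug_H=\ug$ for all $H\in\widehat{\H}_0(-e_1)$, which is precisely the hypothesis of Corollary \ref{Foliated_noncon}. This is the paper's (short) proof. Your substitute --- cap symmetrization toward $-e_1$, a non-local P\'olya--Szeg\H{o} inequality for the Gagliardo seminorm under foliated Schwarz rearrangement, and its equality/rigidity case --- is exactly the machinery the paper avoids; the rigidity statement you rely on is not established anywhere in the paper, and you yourself flag it as the delicate unresolved step. As written, case (ii) does not close.
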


\begin{remark}
It is necessary to highlight that the symmetry obtained in the above theorem is independent of the size of the annular domain $\B_t$ defined in \eqref{annulus}. Indeed, this is in contrast to the cases where radial symmetry fails as described earlier; cf. \cite[Theorem 6]{Chanillo} for the local case and \cite[Theorem 1.4]{LeeLee} for the non-local case. However, as a consequence of the previous theorem, we show that (Corollary \ref{compo}) the axial symmetry is still preserved by the minimizers in the typical composite membrane problem considered in \cite{LeeLee}.
\end{remark}

To prove Theorem \ref{foliated_theo}, again, we exploit the polarization invariant structure of the minimizing weight functions and the eigenfunctions of \eqref{Problem}. The first step is to observe that the annular domain $\B_t$, for $t\geq 0,$ is symmetric about the $x_1$-axis. Moreover, $\B_0$ remains unchanged under polarization with respect to any half-space containing the origin in its boundary. Similarly, $\B_t,$ for $t>0,$ remains unchanged under polarization with respect to any half-space containing the origin in its boundary and containing $-e_1.$ Then, we achieve the conclusion by utilizing the aforementioned properties of $\B_t$ and using a representation of the foliated Schwarz symmetry in terms of polarization.  

The rest of the article is arranged in the following way. In the next section, we collect some preliminary results, which will be used later. In particular, fractional Sobolev spaces, the existence of the first eigenvalue and several symmetrizations are discussed in Section \ref{Preli}. Proofs of the main results (Theorem \ref{min_theo}, Theorem \ref{steiner_theo} and Theorem \ref{foliated_theo}) are presented in Section \ref{main_results}.

\section{Preliminaries}\label{Preli}
In this section, we introduce some useful tools that will be needed to prove our main results. First, we recall the definitions of fractional Sobolev spaces that we are going to work with and state some basic properties of these spaces. Then, we prove the existence results regarding the first eigenvalue of \eqref{Problem}. In the last two subsections, we state various properties of the polarizations and define several symmetries, respectively. 

\subsection{Fractional Sobolev spaces}\label{Frac_sp} 
For $s\in (0,1),$ let
\begin{equation*}
[u]_s :=\left(\intRn\intRn\uxys\dx\dy\right)^{\frac{1}{2}}
\end{equation*}
be the Gagliardo semi-norm of $u$ in $\R^N.$
The fractional Sobolev space $\Hsrn$ (also 
 called as Slobodeckij, Aronszajn or Gagliardo space)  is defined as below
\begin{equation}\label{Sobolev_sp}
\begin{aligned}
    \Hsrn =\left\{u \in L^2(\R^N): [u]_s^2 <\infty\right\}.
    \end{aligned}
\end{equation}
For $u,v\in \Hsrn,$ let $\langle u,v\rangle_{\Hsrn}$ be given by 
\begin{equation*}\label{inner_pr}
    \langle u,v\rangle_{\Hsrn}:= \intRn u(x)v(x)\dx + \intRn\intRn \frac{(u(x)-u(y))(v(x)-v(y))}{|x-y|^{N+2s}}\dx\dy.
\end{equation*}
We can easily verify that $\langle \cdot,\cdot\rangle_{\Hsrn}$ defines an inner product on $\Hsrn$. Let $||\cdot||_{\Hsrn}$ be the norm induced by the aforementioned inner product, i.e., for $u\in \Hsrn,$ we have
\begin{equation*}
    ||u||_{\Hsrn}=\left(||u||^2_{L^2(\R^N)}+[u]_s^2\right)^\frac{1}{2}.
\end{equation*}
Then $\Hsrn$ is a Hilbert space with respect to the norm $||\cdot||_{\Hsrn}$; see, for instance, \cite[Section 1.2.2]{Radulescu} or \cite[Lemma 6 and Lemma 7]{Servadei2012}. Let $\Om\subset \R^N$ be a bounded domain. We define the Sobolev space $\xsom$, a subspace of $\Hsrn,$ as below
\begin{equation*}
    \xsom =\{u \in \Hsrn: u\equiv 0 \;\text{in }\;\Om^c\}.
\end{equation*}
 Observe that, for $u\in \xsom,$ we have 
\begin{equation*}\label{hs_norm}
||u||_{\xsom}:=||u||_{\Hsrn}=\left(||u||^2_{L^2(\Om)}+[u]_s^2\right)^\frac{1}{2}.
\end{equation*}
Now, we recall some properties of the 
Sobolev space $\xsom.$

\begin{theorem} \label{Sobolev_theo}
Let $\Om\subset \R^N$ be a smooth, bounded domain and $s\in (0,1).$ Then the following holds:
    \begin{enumerate}[(i)]
        \item The space $\xsom$ is a Hilbert space with respect to the norm $||\cdot||_{\xsom}.$

        \item Let $2_s^*:=\frac{2N}{N-2s}$ be the fractional critical Sobolev exponent of $2.$ Then the embedding $\xsom\hookrightarrow L^r(\Om)$ is compact for all $r\in[1,2_s^*).$

        \item The embedding $\xsom\hookrightarrow L^{2_s^*}(\Om)$ is continuous.
    \end{enumerate}
\end{theorem}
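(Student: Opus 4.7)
The plan is to reduce each item to well-catalogued properties of the space $\xsom$, essentially the fractional Sobolev inequalities plus an abstract closedness argument. For (i), since the paper already records that $\Hsrn$ is a Hilbert space, it suffices to show that $\xsom$ is a closed subspace of $\Hsrn$; this is a short argument. Given a sequence $\{u_n\}\subset\xsom$ with $u_n\to u$ in $\Hsrn$, convergence in $L^2(\RN)$ yields a subsequence converging a.e.\ to $u$; since each $u_n$ vanishes a.e.\ on $\Om^c$, so does $u$, hence $u\in\xsom$. The inherited inner product $\langle\cdot,\cdot\rangle_{\Hsrn}$ then turns $\xsom$ into a Hilbert space, and one notes that on $\xsom$ it simplifies to $||u||^2_{L^2(\Om)}+[u]_s^2$.

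For (iii), I would invoke the classical fractional Sobolev inequality on $\R^N$: there exists $C=C(N,s)>0$ with
\begin{equation*}
\|v\|_{L^{2_s^*}(\RN)}\leq C\,[v]_s,\qquad \forall\,v\in \Hsrn,
\end{equation*}
as proved, e.g., in \cite[Theorem 6.5]{Valdinoci2012}. Applying this to $u\in\xsom$ and using $u\equiv 0$ in $\Om^c$ gives $\|u\|_{L^{2_s^*}(\Om)}=\|u\|_{L^{2_s^*}(\RN)}\leq C\,[u]_s\leq C\,||u||_{\xsom}$, which is exactly the desired continuous embedding.

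For (ii), I would combine (iii) with boundedness of $\Om$ and an interpolation/Rellich argument. The boundedness of $\Om$ together with (iii) gives continuous embedding $\xsom\hookrightarrow L^r(\Om)$ for every $r\in[1,2_s^*]$. To upgrade to compactness for $r<2_s^*$, take a sequence bounded in $\xsom$; it is bounded in $H^s(\Om)$ by restriction, so by the classical compact embedding $H^s(\Om)\hookrightarrow L^2(\Om)$ on a smooth bounded domain (see \cite[Theorem 7.1]{Valdinoci2012}), a subsequence converges strongly in $L^2(\Om)$. The H\"older interpolation inequality
\begin{equation*}
\|v\|_{L^r(\Om)}\leq \|v\|_{L^2(\Om)}^{\theta}\|v\|_{L^{2_s^*}(\Om)}^{1-\theta},\qquad \tfrac{1}{r}=\tfrac{\theta}{2}+\tfrac{1-\theta}{2_s^*},\ \theta\in(0,1],
\end{equation*}
combined with the uniform $L^{2_s^*}(\Om)$-bound from (iii), then promotes this to strong convergence in $L^r(\Om)$ for every $r\in[1,2_s^*)$.

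None of the three steps is a genuine obstacle; the mildest subtlety is (ii), where one has to avoid citing a compact embedding at the critical exponent (which is false). The interpolation trick above sidesteps this cleanly, so the main task is really bookkeeping and invoking the correct references.
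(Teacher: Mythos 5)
Your proposal is correct. It is worth noting, though, that the paper does not actually prove this theorem: its ``proof'' consists entirely of citations to \cite[Lemmas 1.28, 1.29 and 1.31]{Radulescu} and \cite[Lemma 9]{Servadei2015}, whereas you supply the underlying arguments. Your three steps are the standard ones and all go through: (i) closedness of $\xsom$ in $\Hsrn$ via $L^2(\RN)$-convergence and a.e.\ subsequence extraction is exactly right; (iii) the fractional Sobolev inequality applies because every $u\in\xsom$ vanishes a.e.\ outside the bounded set $\Om$ --- note that \cite[Theorem 6.5]{Valdinoci2012} is stated for compactly supported functions, not for all of $\Hsrn$ as your displayed inequality suggests, but this is harmless since you only apply it to elements of $\xsom$ (the version for general $\Hsrn$ requires the full norm on the right); (ii) the restriction of a bounded sequence in $\xsom$ to $\Om$ is bounded in $H^s(\Om)$, the Rellich-type theorem \cite[Theorem 7.1]{Valdinoci2012} applies since a smooth bounded domain is an extension domain, and your interpolation step correctly avoids the false compact embedding at the critical exponent (for $r\in[1,2)$ one just uses H\"older on the bounded domain, which you implicitly cover). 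The only thing your route ``costs'' is that it leans on two external theorems from \cite{Valdinoci2012} rather than the single reference the paper uses; what it buys is an actual, checkable argument in place of a bare citation.
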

\begin{proof}
    $(i)$ Proof follows by using \cite[Lemma 1.28 and Lemma 1.29]{Radulescu}.

    $(ii)$ and $(iii)$ follows from \cite[Lemma 1.31]{Radulescu} (see also \cite[Lemma 9]{Servadei2015}).
\end{proof}

\subsection{Existence of the first eigenvalue}\label{eigen_sec}
In this subsection, we show the existence of the first eigenvalue of \eqref{Problem} and state some properties of the first eigenfunctions. The proof follows by adapting a similar method, as described in the proof of \cite[Theorem A.3, Appendix]{UNM}, to the non-local case; see \cite[Theorem 3.3]{Bonder2018} also for the case $g=1$. We only give a sketch here.

Let $\la_1(g,V)$ be as defined by \eqref{variational}, i.e., 
$$\la_1(g,V)=\inf\left\{ \frac{\frac{\cns}{2}\uss+\into V(x)|u(x)|^2\dx}{\into g(x)|u(x)|^2\dx} : u\in \xsom,\;\into g(x)|u(x)|^2\dx>0\right\}.$$

\begin{theorem}\label{Eigen_exist}
    Let $\Om\subset \R^N$ be a smooth, bounded domain and $g,V$ satisfy the assumptions of \eqref{assumption}. Then we have the following:
    \begin{enumerate}[(i)]
        \item There exists $u\in \xsom$ such that $\la_1(g,V)$ is attained at $u$ and $\la_1(g,V)>0.$ In particular, $\la_1(g,V)$ is the first eigenvalue and $u$ is a first eigenfunciton of \eqref{Problem}.

        \item There exists first eigenfunction $w\in \xsom$ such that $w$ can be chosen to be positive (a.e.) in $\Om,$ i.e.,  $w>0$ a.e. in $\Om.$

        \item Furthermore, $\la_1(g,V)$ is simple, i.e., if $\la_1(g,V)$ is attained at both $u,w\in \xsom,$ then there exists $c\in \R$ such that $u=cw$ a.e. in $\Om.$ 
    \end{enumerate}
\end{theorem}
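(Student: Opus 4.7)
The plan is the direct method of the calculus of variations, with positivity and simplicity then deduced from the strong maximum principle for $\flap$.

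For (i), I would first establish coercivity. Under \eqref{assumption}, H\"older's inequality together with the Sobolev embedding $\xsom\hookrightarrow L^{2q'}(\Om)$ (valid since $q>\frac{N}{2s}$ forces $2q'<2_s^*$) yields
$$\into V^{-}(x)|u(x)|^2\,\dx \leq \|V^{-}\|_q\,\|u\|_{L^{2q'}(\Om)}^2 \leq \|V^{-}\|_q\,C^2\,[u]_s^2 \leq \left(\tfrac{\cns}{2}-\de\right)[u]_s^2,$$
so the numerator in \eqref{variational} is bounded below by $\de[u]_s^2$. Since $g^{+}\not\equiv 0$, the denominator is positive on some test function, so $\la_1(g,V)\in[0,\infty)$. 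I would then take a minimizing sequence $\{u_n\}\subset\xsom$ normalized by $\into g|u_n|^2\,\dx=1$; the coercive bound gives boundedness in $\xsom$. By Theorem \ref{Sobolev_theo}(ii), up to a subsequence $u_n\wra u$ in $\xsom$ and $u_n\ra u$ strongly in $L^{2q'}(\Om)$. Strong $L^{2q'}$ convergence together with H\"older gives $\into V|u_n|^2\,\dx\to \into V|u|^2\,\dx$ and $\into g|u_n|^2\,\dx\to 1$, while $[\cdot]_s^2$ is weakly lower semicontinuous. This forces $u$ to attain the infimum, and the coercive bound then yields $\la_1(g,V)\geq \de[u]_s^2/\into g|u|^2\,\dx>0$.

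For (ii), the pointwise inequality $\abs{|u(x)|-|u(y)|}\leq |u(x)-u(y)|$ gives $[|u|]_s\leq [u]_s$, while the remaining two terms in \eqref{variational} are invariant under $u\mapsto |u|$. Hence $w:=|u|$ is a nonnegative minimizer and hence a nonnegative first eigenfunction. Applying the strong maximum principle for $\flap$ (as used, for instance, in \cite{Bonder2018}) to $w$, which satisfies $\flap w+Vw=\la_1 g w$ with $w\geq 0$ in $\RN$ and $w\not\equiv 0$, yields $w>0$ a.e.\ in $\Om$.

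For (iii), suppose $u,w\in\xsom$ both attain $\la_1(g,V)$; by (ii), after replacing them by their absolute values if necessary, both are strictly positive a.e.\ in $\Om$, and by the interior regularity theory for the fractional Laplacian with $L^q$ weights both are continuous in $\Om$. Fix $x_0\in\Om$ and set $c=u(x_0)/w(x_0)>0$; then $z:=u-cw$ again solves the eigenvalue problem with eigenvalue $\la_1$ and satisfies $z(x_0)=0$. If $z\not\equiv 0$, applying (ii) to $z$ would force $|z|>0$ a.e.\ in $\Om$, contradicting $z(x_0)=0$ by continuity. Hence $z\equiv 0$, giving $u=cw$ a.e.\ in $\Om$. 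The main technical obstacle is the invocation of the strong maximum principle and interior regularity for the fractional Laplacian in the weighted/perturbed setting of \eqref{assumption}; the coercivity estimate above is precisely what allows $V$ to be absorbed as a lower-order perturbation. The remainder is a standard direct-method argument, which is why the paper only sketches the proof.
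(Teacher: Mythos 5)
Your parts (i) and (ii) follow essentially the same route as the paper: the coercivity estimate you write out is exactly Lemma \ref{un_bound}, the direct method with the compact embedding $\xsom\hookrightarrow L^{2q'}(\Om)$ is the paper's argument for attainment, and the passage to $|u|$ via $\bigl||u(x)|-|u(y)|\bigr|\leq|u(x)-u(y)|$ followed by the strong maximum principle is verbatim the paper's proof of (ii). The divergence is in (iii): the paper proves simplicity by the Picone-type identity of Amghibech, following \cite[Theorem 3.6]{Bonder2018}, which works entirely at the level of weak solutions; you instead use the classical argument of forming $z=u-cw$ with $c$ chosen so that $z(x_0)=0$ and deriving a contradiction from strict positivity. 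Your route is viable but buys its simplicity at the cost of two extra inputs you should make explicit. First, you need interior regularity (local boundedness and continuity) of eigenfunctions for $\flap$ with $L^q$, $q>\frac{N}{2s}$, coefficients; this is true but is a nontrivial ingredient the Picone argument avoids, and ``$|z|>0$ a.e.'' alone does not contradict $z(x_0)=0$ at a single point --- you need the $\essinf_K$ form of the maximum principle combined with continuity to get pointwise positivity. Second, before ``applying (ii) to $z$'' you must check that $z$ is actually a minimizer of the Rayleigh quotient, i.e.\ that $\into g|z|^2\,\dx>0$; this follows by testing the equation for $z$ with $z$ itself and using your coercivity bound (if $\into g|z|^2\,\dx\leq 0$ then $\de[z]_s^2\leq 0$, forcing $z\equiv 0$), but the step should not be skipped. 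With those two points supplied, your proof is correct; the paper's Picone route is the more economical one in this low-regularity setting.
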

\begin{proof}
    $(i)$ Observe that, using the homogeneity, $\la_1(g,V)$ can be
    equivalently characterized as 
    \begin{equation*}
        \la_1(g,V)=\inf\left\{ \frac{\cns}{2}\uss+\into V(x)|u(x)|^2\dx: u\in \xsom,\;\into g(x)|u(x)|^2\dx=1\right\}.
    \end{equation*}
    Let $(\phi_n)\in \xsom$ be a minimizing sequence for $\la_1(g,V).$ Then we can show that (see, for instance, the proof of Lemma \ref{un_bound}) $(\phi_n)$ is actually bounded in $\xsom.$ Therefore, using the standard compactness arguments, we get a limit $u\in \xsom$ of the sequence $\phi_n$ such that $\la_1(g,V)$ is attained at $u.$ Consequently, we have $\la_1(g,V)>0.$

    $(ii)$ Let $u$ be a first eigenfunction associated to $\la_1(g,V)$. Now using the fact $||u(x)|-u(y)|\leq |u(x)-u(y)|,$ we have $[|u|]_s^2\leq [u]_s^2.$ Thus $|u|$ is also a first eigenfunction associated to $\la_1(g,V)$. Moreover, using \cite[Theorem 2.9]{Bonder2018}, we get $|u|>0$ a.e. in $\Om.$ Now the conclusion follows by choosing $w=|u|.$

    $(iii)$ The proof follows by similar arguments as in \cite[Theorem 3.6]{Bonder2018} using the Picone-type identity \cite[Lemma 6.2]{Amghibech}.
    
\end{proof}

\subsection{Polarization}\label{pola_sec}
Let $\H$ be the collection of all open affine half-spaces in $\R^N.$ For $H\in \H,$ let $\sigma_H$ denotes the reflection with respect to the hyperplane $\pa H.$ Now, we define the polarization of a measurable set and a measurable function; cf. \cite[Section 5]{BrockSolynin} or \cite[Definition 1.2]{UNM}.

\begin{definition}\label{pola_def}
    Let $\Om\subset \R^N$ be a measurable set and $\phi:\R^N\ra\R$ be a measurable function. Suppose $H\in \H.$
    \begin{enumerate}[(i)]
        \item The polarization $\Om_H$ of $\Om$ with respect to $H$ is defined as 
        $$\Om_H=((\Om\cup \sigma_H(\Om))\cap H)\cup (\Om\cap \sigma_H(\Om)).$$

        \item The polarization $\phi_H$ of $\phi$ with respect to $H$ is defined by 
        \[
        \phi_H(x)= \left\{
        \begin{array}{@{}l@{\thinspace}l}
        \max \{\phi(x),\phi(\sigma_H(x))\}, \;\;& \text{if}\; x \in H,\\
       \min \{\phi(x),\phi(\sigma_H(x))\}, \;\;& \text{if}\  x \in H^c.
     \end{array}
        \right.
        \] 
         If $\Om\subsetneq \R^N$ and $\phi:\Om\ra\R$ is a measurable function, then the polarization $\phi_H$ of $\phi$ is given by $\phi_H:=\tilde{\phi}_H|_\Om,$
        where $\tilde{\phi}$ is the zero extension of $\phi$ to $\R^N$ as below
        \[
        \tilde{\phi}(x)= \left\{
        \begin{array}{@{}l@{\thinspace}l}
        \phi(x), \;\;& \text{if}\; x \in \Om,\\
       0, \;\;& \text{if}\  x \in \Om^c.
     \end{array}
        \right.
        \] 

        \item The dual-polarization $\phi^H$ of $\phi$ on $\si_H(\Om)$ is defined as $\phi^H=\phi_H\circ \si_H.$
    \end{enumerate}
\end{definition}

The following lemma states a few immediate facts about the polarization of a domain; for a proof, see \cite[Proposition 2.1]{UNM}.
\begin{lemma}\label{Pola_domain}
    Let $H\in \H$ and $\Om\subset \R^N$ $(N\geq 2)$ be a domain such that $\Om_H=\Om.$ Then we have the following:
    \begin{enumerate}[(i)]
        \item $\si_H(\Om^c\cap H)\subset \Om^c\cap H^c$ and $\si_H(\Om\cap \overline{H}^c)\subset \Om\cap H$.

        \item if $\si_H(\Om)\neq \Om,$ then there exists $A\subset \Om\cap H$ with $|A|>0$ such that $\si_H(A)\subset \Om^c\cap \overline{H}^c.$
    \end{enumerate}
\end{lemma}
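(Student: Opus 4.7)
The plan is to first intersect the definition of $\Om_H$ with $H$ and with $H^c$ separately. Since $H$ is open, $H\cap H^c=\emptyset$, so the definition
$$\Om_H=((\Om\cup \si_H(\Om))\cap H)\cup (\Om\cap \si_H(\Om))$$
gives
$$\Om_H\cap H=(\Om\cup \si_H(\Om))\cap H,\qquad \Om_H\cap H^c=\Om\cap \si_H(\Om)\cap H^c.$$
Using the hypothesis $\Om_H=\Om$, these become the two key inclusions
$$\si_H(\Om)\cap H\subset \Om\qquad\text{and}\qquad \Om\cap H^c\subset \si_H(\Om),$$
which drive the remainder of the proof.

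For part (i), I would read off both claims directly from these inclusions. If $x\in \Om^c\cap H$, the first inclusion forces $x\notin \si_H(\Om)$, i.e.\ $\si_H(x)\notin \Om$; since $\si_H(H)=\overline{H}^c\subset H^c$, this yields $\si_H(x)\in \Om^c\cap H^c$. The second claim is symmetric: for $x\in \Om\cap \overline{H}^c\subset \Om\cap H^c$, the second key inclusion forces $x\in \si_H(\Om)$ and hence $\si_H(x)\in \Om$, while $\si_H(\overline{H}^c)=H$ handles the position.

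For part (ii), my candidate is $A:=(\Om\cap H)\setminus \si_H(\Om)$. The containment $\si_H(A)\subset \Om^c\cap \overline{H}^c$ is immediate: for $x\in A$, $\si_H(x)\notin \Om$ and, since $x\in H$, also $\si_H(x)\in \overline{H}^c$. The heart of the argument is showing $|A|>0$. Writing
$$\Om\setminus \si_H(\Om)=A\cup \big((\Om\cap H^c)\setminus \si_H(\Om)\big),$$
the second key inclusion above makes the right-hand piece empty, so $A=\Om\setminus \si_H(\Om)$.

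The main obstacle is translating the set-theoretic hypothesis $\si_H(\Om)\neq \Om$ into positivity of Lebesgue measure. For this I would use that $\Om$ (and hence $\si_H(\Om)$) is open, so if the two sets differ as sets then their symmetric difference contains a non-empty open piece and therefore has positive measure; since $\si_H$ is measure-preserving, this yields $|A|=|\Om\setminus \si_H(\Om)|=\tfrac{1}{2}|\Om\triangle \si_H(\Om)|>0$. Beyond this point the argument is straightforward bookkeeping with the definitions of polarization and reflection.
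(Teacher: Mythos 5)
Your set-theoretic skeleton is correct and is the natural one: the decomposition of $\Om_H$ over $H$ and $H^c$, the two key inclusions $\si_H(\Om)\cap H\subset\Om$ and $\Om\cap H^c\subset\si_H(\Om)$, all of part (i), and the identification $A=(\Om\cap H)\setminus\si_H(\Om)=\Om\setminus\si_H(\Om)$ are all fine (the paper itself only cites an external reference here, so there is no in-paper proof to compare against). The genuine gap is the last step of part (ii). The claim that two distinct open sets have a symmetric difference containing a nonempty open piece is false: $U\setminus V$ is the intersection of an open set with a closed set and can be nonempty yet of measure zero. Worse, this is not a removable blemish but exactly the point where a hypothesis on $\Om$ must enter. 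Take $N=2$, $H=\{x_1>0\}$, $p=(1/2,0)$ and $\Om=B_1(0)\setminus\{-p\}$. This is a domain, one checks directly that $\Om_H=\Om$, and $\si_H(\Om)=B_1(0)\setminus\{p\}\neq\Om$, yet $\Om\triangle\si_H(\Om)=\{p,-p\}$ has measure zero and the only candidate point for $A$ is the singleton $\{p\}$. So for bare domains the conclusion $|A|>0$ is simply not true, and no argument can close your gap without using more than openness and connectedness.

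The repair is available under the regularity actually assumed throughout the paper ($\Om$ smooth and bounded, hence $\Om=\mathrm{int}(\overline{\Om})$). Suppose $|A|=|\Om\setminus\si_H(\Om)|=0$. For any $x\in\Om$ and any small ball $B\subset\Om$ centered at $x$ we have $|B\setminus\si_H(\Om)|\le|A|=0$, so $B\cap\si_H(\Om)\neq\emptyset$; hence $\Om\subset\overline{\si_H(\Om)}$ and, $\Om$ being open, $\Om\subset\mathrm{int}\big(\overline{\si_H(\Om)}\big)=\si_H\big(\mathrm{int}(\overline{\Om})\big)=\si_H(\Om)$, where the middle equality uses that $\si_H$ is an isometry. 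Since $\si_H(\Om)\setminus\Om=\si_H(\Om\setminus\si_H(\Om))=\si_H(A)$ also has measure zero, the same argument gives $\si_H(\Om)\subset\Om$, whence $\si_H(\Om)=\Om$, contradicting the hypothesis. With this substitution (and noting that $|A|=\tfrac12|\Om\triangle\si_H(\Om)|$ does follow from $\si_H(\Om)\setminus\Om=\si_H(A)$ and the measure invariance of $\si_H$), your proof is complete; but you should state explicitly that you are using $\Om=\mathrm{int}(\overline{\Om})$, since the lemma as phrased for arbitrary domains is false.
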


% \begin{lemma}[Rearrangement lemma]
%     \tr{$\phi_H$ is a rearrangement of $\phi.$}
% \end{lemma}

Now, we prove some properties related to the fractional Sobolev norm of a function and its polarization.
\begin{proposition}\label{frac_norm}
    Let $H\in \H,$ $\Om\subset \R^N$ be a domain satisfying $\Om_H=\Om$ and $\phi:\Om\rightarrow\R$ be a nonnegative measurable function. Also, let $\phi_H$ be the polarization of $\phi$ with respect to $H.$ Then the following holds:
    \begin{enumerate}[(i)]
        \item $\phi_H$ is a rearrangement of $\phi.$
        
        \item In addition, let $\phi \in \Hsrn$. Then we have $$[\phi_H]^2_s\leq [\phi]^2_s.$$
    
    \item Let $\phi$ be as in $(ii).$ Then $$||\phi_H||_{\Hsrn}\leq ||\phi||_{\Hsrn}.$$
    In particular, we get $\phi_H\in \Hsrn.$

    \item If $\phi\in \xsom,$ then $\phi_H\in \xsom.$
    \end{enumerate}
    
\end{proposition}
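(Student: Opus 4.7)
The plan is to reduce each assertion to a pointwise analysis on pairs $\{x, \sigma_H(x)\}$, on which the polarization simply permutes the two values $\phi(x)$ and $\phi(\sigma_H x)$. For part $(i)$, fix $t \in \R$; since the unordered pair $\{\phi_H(x), \phi_H(\sigma_H x)\}$ equals $\{\phi(x), \phi(\sigma_H x)\}$ at every $x$, and since $\sigma_H$ preserves Lebesgue measure, summing the super-level set contributions on $H$ and on $H^c$ gives $|\{\phi_H > t\}| = |\{\phi > t\}|$, so $\phi_H$ is a rearrangement of $\phi$.

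For part $(ii)$ I would split
\[
[\phi]_s^2 = \left(\iint_{H\times H} + \iint_{H\times H^c} + \iint_{H^c\times H} + \iint_{H^c\times H^c}\right)\frac{|\phi(x)-\phi(y)|^2}{|x-y|^{N+2s}}\dx\dy,
\]
and fold the last three integrals onto $H\times H$ by applying the measure-preserving maps $y\mapsto\sigma_H y$, $x\mapsto\sigma_H x$, or both. Writing $a = \phi(x)$, $a' = \phi(\sigma_H x)$, $b = \phi(y)$, $b' = \phi(\sigma_H y)$, $K_1 = |x-y|^{-(N+2s)}$, $K_2 = |x-\sigma_H y|^{-(N+2s)}$, and using $|\sigma_H x - \sigma_H y| = |x-y|$ together with $|\sigma_H x - y| = |x - \sigma_H y|$, I obtain
\[
[\phi]_s^2 = \iint_{H\times H}\Bigl\{K_1\bigl[(a-b)^2 + (a'-b')^2\bigr] + K_2\bigl[(a-b')^2 + (a'-b)^2\bigr]\Bigr\}\dx\dy.
\]
The identical computation for $\phi_H$ replaces $(a,a',b,b')$ by $(\max\{a,a'\}, \min\{a,a'\}, \max\{b,b'\}, \min\{b,b'\})$. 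Since these are rearrangements of the original tuples, the four-term sum $(a-b)^2 + (a-b')^2 + (a'-b)^2 + (a'-b')^2$ is preserved, and the integrand difference collapses to $(K_1-K_2)\,\Delta(x,y)$, where
\[
\Delta(x,y) = (a-b)^2 + (a'-b')^2 - \bigl(\max\{a,a'\}-\max\{b,b'\}\bigr)^2 - \bigl(\min\{a,a'\}-\min\{b,b'\}\bigr)^2.
\]
For $x, y \in H$ one has $|x-y| \leq |x - \sigma_H y|$, so $K_1 \geq K_2$, and a two-line case analysis on the signs of $a - a'$ and $b - b'$ gives $\Delta(x,y) \geq 0$ (it is $0$ when these signs agree and equals $2|a-a'|\,|b-b'|$ otherwise). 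This proves $(ii)$.

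Parts $(iii)$ and $(iv)$ are corollaries. Since $\phi \geq 0$ and $\phi_H$ is a rearrangement of $\phi$ by $(i)$, the $L^2(\R^N)$-norms agree via the layer-cake formula; combined with $(ii)$ this gives $||\phi_H||_{\Hsrn} \leq ||\phi||_{\Hsrn}$, which proves $(iii)$. For $(iv)$, extend $\phi \in \xsom$ by zero to $\R^N$; by Lemma \ref{Pola_domain}(i), $\sigma_H(\Om^c \cap H) \subset \Om^c \cap H^c$, so for $x \in \Om^c \cap H$ both $\phi(x)$ and $\phi(\sigma_H x)$ vanish and hence $\phi_H(x) = 0$, while for $x \in \Om^c \cap H^c$ we have $\phi(x) = 0$ and $\phi(\sigma_H x) \geq 0$, so $\phi_H(x) = \min\{0, \phi(\sigma_H x)\} = 0$. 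Hence $\phi_H \equiv 0$ on $\Om^c$, and $(iii)$ gives $\phi_H \in \xsom$. The main obstacle is the folding step in $(ii)$ — carefully combining the four quadrant integrals into a single integral over $H \times H$ with two distinct kernels — after which the proof reduces to the elementary max-min inequality $\Delta \geq 0$ displayed above.
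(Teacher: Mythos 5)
Your proof is correct. For parts $(iii)$ and $(iv)$ it coincides with the paper's argument (rearrangement preserves the $L^2$-norm, and Lemma \ref{Pola_domain}-$(i)$ forces $\phi_H=0$ on $\Om^c$). The difference is in $(i)$ and $(ii)$: the paper simply cites \cite{UNM} and \cite[Lemma 8.1]{BrockSolynin}, whereas you reprove them. Your folding of the four quadrant integrals onto $H\times H$, the observation that the total $(a-b)^2+(a-b')^2+(a'-b)^2+(a'-b')^2$ is a symmetric function of $\{a,a'\}$ and $\{b,b'\}$ so that the difference collapses to $(K_1-K_2)\Delta$, and the two facts $K_1\geq K_2$ on $H\times H$ and $\Delta\geq 0$ constitute exactly the standard two-point rearrangement proof of the cited lemma, and all the steps check out (in particular $|\sigma_H x-y|=|x-\sigma_H y|$ and $|x-y|\le|x-\sigma_H y|$ for $x,y\in H$ are correct, and Tonelli justifies the folding since every quadrant integrand is nonnegative). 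One small point worth making explicit in $(i)$: for $t\ge 0$ the superlevel set $\{\tilde\phi_H>t\}$ is contained in $\Om$ precisely because $\phi\ge 0$ and $\Om_H=\Om$ (a point in $H\setminus\Om$ can have $\tilde\phi_H>0$ only if its reflection lies in $\Om$, which would put it in $\Om_H$), and for $t<0$ both superlevel sets are all of $\Om$ by nonnegativity; this is where the hypothesis $\Om_H=\Om$ enters, and your sketch glosses over it. This is a minor omission, not a gap.
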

\begin{proof}
    $(i)$ For a proof, we refer to \cite[Proposition 2.2]{UNM} or \cite[Section 5]{BrockSolynin}.

    $(ii)$ Proof follows using \cite[Lemma 8.1]{BrockSolynin}.

 $(iii)$ By $(i),$ we have $\phi_H$ is a rearrangement of $\phi.$ Hence $||\phi||_{L^2(\R^N)}=||\phi_H||_{L^2(\R^N)}$ (cf. \cite[Proposition 2.3]{Schaftingen}). Thus the statement follows from $(ii).$

 $(iv)$ Let $\phi \in\xsom.$ Then $\phi \in \Hsrn$ and $\phi =0$ in $\Om^c.$ Let $x\in \Om^c.$ We consider two cases. If $x\in H,$ then by Lemma \ref{Pola_domain}-$(i)$, we get $\si_H(x)\in  \Om^c.$ Thus $\phi (\si_H(x))=0$ and hence using Definition \ref{pola_def}-$(ii)$, we have $\phi_H(x)=0.$ If $x\in  H^c,$ then again using Definition \ref{pola_def}-$(ii)$, we deduce that $\phi_H(x)=0$ (since $\phi\geq 0$). Therefore, $\phi_H=0$ in $\Om^c.$ Moreover, $(iii)$ yields $\phi_H\in \Hsrn.$  Hence we get $\phi_H\in \xsom.$
\end{proof}

Throughout the article, we do not distinguish two functions $f$ and $g$ defined on a domain $\Om\subset \R^N$ if they are equal a.e. in $\Om$, i.e., 
$$\text{`}f=g\; \text{in}\;\Om\text{'}\;\text{essentially means that}\; \text{`}f=g\;\text{a.e. in}\;\Om.\text{'}$$

\begin{remark}\label{Some_remarks} 
Here, we state some facts about the polarization that is immediate from the definitions and the properties as mentioned above. Let $\Om\subset \R^N$ be measurable, $H\in \H$ and $\phi:\Om\rightarrow\R$ be a nonnegative measurable function.
    \begin{enumerate}[(i)]
        \item Suppose $\phi\geq\phi\circ \si_H$ in $\Om\cap H.$ Then we have $\phi\leq \phi\circ \si_H$ in $\Om\cap \overline{H}^c$. Hence from Definition \ref{pola_def}-$(ii),$ we must have $$\phi_H=\phi\;\text{in}\;\Om.$$
        Similarly, if $\phi\leq \phi\circ \si_H$ in $\Om\cap H,$ it follows that $\phi^H=\phi$ in $\Om.$

        \item Observe that, $\overline{H}^c\in \H.$ Now, if $\Om=\Om_{\overline{H}^c},$ then Proposition \ref{pola_propo}-$(i)$ infers that $\phi_{\overline{H}^c}$ is a rearrangement of $\phi.$ Moreover, using Definition \ref{pola_def}-$(iii),$ we get $\phi^H=\phi_{\overline{H}^c}.$ Therefore, if $\Om=\Om_{\overline{H}^c},$ then $\phi^H$ is a rearrangement of $\phi.$ 
        
        \item If $\Om$ is symmetric with respect to $\pa H,$ then we have $\Om_H=\Om_{\overline{H}^c}=\Om$. Hence in this case, using $(ii)$ and Proposition \ref{pola_propo}-$(i),$ respectively, it follows that both $\phi^H$ and $\phi_H$ are rearrangements of $\phi.$ 
 
    \end{enumerate}
\end{remark}

Next, we state Hardy-Littlewood and reverse Hardy-Littlewood inequality for polarization.
\begin{lemma}\label{Hardy}
    Let $p\in [1,\infty),$ $H\in \H$ such that $0\in \overline{ H},$ and $\Om\subset \R^N$ be a bounded domain with $\Om_H=\Om.$ Suppose $u,v\in L^p(\Om)$ are such that $uv
    \in L^1(\Om).$ Then the following holds:
    \begin{enumerate}[(i)]
        \item (Hardy-Littlewood) Let either $u$ or $v$ is nonnegative. Then 
        $$\int_{\Om} u(x)v(x)\dx \leq \int_{\Om} u_H(x)v_H(x)\dx.$$

        \item (Reverse Hardy-Littlewood) Let $v$ is nonnegative. Then 
        $$\int_{\Om} u^H(x)v_H(x)\dx \leq \int_{\Om} u(x)v(x)\dx.$$
    \end{enumerate}
\end{lemma}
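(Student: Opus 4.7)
The plan is to reduce both inequalities to a two-point pointwise estimate, integrate it over $H$, unfold with the reflection $\si_H$, and then pass back to $\Om$ by exploiting the domain invariance $\Om_H=\Om$.

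The first step is the following elementary two-point inequality: for all $a,b\in\R$ and all $c,d\geq 0$,
\[
\max(a,b)\min(c,d)+\min(a,b)\max(c,d)\;\leq\; ac+bd\;\leq\; \max(a,b)\max(c,d)+\min(a,b)\min(c,d),
\]
which follows from a short case analysis governed by the sign of $(a-b)(c-d)$. The right estimate will yield the Hardy--Littlewood direction and the left one the reverse inequality; since $(a,b)$ and $(c,d)$ play symmetric roles, in part $(i)$ it is immaterial whether $u$ or $v$ is the nonnegative factor.

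Next, extend $u$ and $v$ by zero to $\R^N$, call the extensions $\tilde u,\tilde v$, and for $x\in H$ put $a=\tilde u(x)$, $b=\tilde u(\si_H(x))$, $c=\tilde v(x)$, $d=\tilde v(\si_H(x))$. From Definition \ref{pola_def}, $\tilde u_H(x)=\max(a,b)$, $\tilde u_H(\si_H(x))=\min(a,b)$, with analogous formulas for $\tilde v_H$, and $\tilde u^H(x)=\min(a,b)$, $\tilde u^H(\si_H(x))=\max(a,b)$. Integrating the pointwise inequality in $x\in H$ and performing the measure-preserving change of variable $y=\si_H(x)$ on each mirror term (using that $\si_H(H)=\overline H^c$ up to the negligible set $\pa H$) gives
\[
\int_{\R^N}\tilde u^H\,\tilde v_H\,\dx \;\leq\; \int_{\R^N}\tilde u\,\tilde v\,\dx \;\leq\; \int_{\R^N}\tilde u_H\,\tilde v_H\,\dx.
\]

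Finally, to transport these $\R^N$-integrals back to $\Om$, I would verify that the products $\tilde u_H\tilde v_H$ and $\tilde u^H\tilde v_H$ are supported in $\Om$. Assuming $v\geq 0$, the extension $\tilde v$ vanishes on $\Om^c$, and Lemma \ref{Pola_domain}$(i)$ then forces $\tilde v_H=0$ on $\Om^c$: for $x\in \Om^c\cap H$ both $\tilde v(x)$ and $\tilde v(\si_H(x))$ vanish because $\si_H(x)\in\Om^c$, while for $x\in\Om^c\cap\overline H^c$ the minimum with $0$ is $0$. Hence $\int_{\R^N}\tilde u_H\tilde v_H=\int_\Om u_H v_H$ and similarly for the dual polarization; together with $\int_{\R^N}\tilde u\tilde v=\int_\Om uv$ this delivers both stated inequalities (in part $(i)$ when $u$ is the nonnegative factor, the same reasoning applies to $\tilde u_H$ instead). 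The only step that truly requires care is this support bookkeeping, which rests on the inclusion $\si_H(\Om^c\cap H)\subset \Om^c\cap H^c$ from Lemma \ref{Pola_domain} and hence on the domain invariance $\Om_H=\Om$; the hypothesis $0\in\overline H$ plays no explicit role in the argument itself.
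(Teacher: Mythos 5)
Your proof is correct. The paper itself gives no argument here — it simply cites \cite[Proposition 2.5]{UNM} — and what you have written is the standard two-point inequality plus unfolding proof that such references use: the elementary estimate $\max\cdot\min+\min\cdot\max\leq ac+bd\leq\max\cdot\max+\min\cdot\min$ applied to the pair $\{x,\si_H(x)\}$, integrated over $H$, and transported back to $\Om$. You correctly identify the one genuinely $\Om$-dependent step, namely that the nonnegative factor's polarization vanishes on $\Om^c$ (via Lemma \ref{Pola_domain}$(i)$ and the sign condition on $H^c$), so that the $\R^N$-integrals localize to $\Om$; and your observation that $0\in\overline{H}$ is not used in this particular lemma is also accurate.
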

\begin{proof}
    For a proof, we refer to \cite[Proposition 2.5]{UNM}.
\end{proof}

\subsection{Symmetrizations and symmetries}
This subsection is devoted to recalling the definitions of various symmetrizations and the symmetries of a measurable function, along with their characterizations.

\subsubsection{Steiner symmetry}
We start with the definition of Steiner symmetrization; cf. \cite[Section 2.2]{Henrot2006}. First, we introduce the following notations: 
\begin{itemize}
    \item $x:=(x',x_N)\in \R^N$, where $x'=(x_1,x_2,\dots,x_{N-1})\in \R^{N-1}$ and $x_N\in \R$,

    \item $\pi_{N-1}$ is the orthogonal projection from $\R^N$ to $\R^{N-1}$,

    \item for measurable $\Om\subset \R^N$, $\Om_{x'}:=\{x_N\in \R:(x',x_N)\in\Om\}$ is the slice of $\Om$ through $x'$ in the direction $x_N$,
    \item $\{x_N=0\}$ is the hyperplane $\{x=(x',x_N)\in \R^N: x_N=0\},$
    \item for $1\leq m\leq N,$ $|\cdot|_m$ denotes the $m$-dimensional Lebesgue measure.
\end{itemize}

\begin{definition}[Steiner symmetric domain]\label{Steiner_dom}
The \textit{Steiner symmetrization} $\Om^\#$ of $\Om$ with respect to the hyperplane $\{x_N=0\}$ is defined as $$\Om^\#=\left\{x=(x',x_N)\in \R^N: |x_N|< \frac{|\Om_{x'}|_1}{2}, x'\in \pi_{N-1}(\Om)\right\}.$$
Then $\Om$ is said to be \textit{Steiner symmetric} with respect to the hyperplane $\{x_N=0\}$  if $\Om=\Om^\#$ (up to translation). 
\end{definition}

\begin{remark}\label{RemS}
    Equivalently, $\Om$ is Steiner symmetric with respect to the hyperplane $\{x_N=0\}$ if 
    \begin{enumerate}[(i)]
        \item $\Om$ is symmetric with respect to the hyperplane $\{x_N=0\}$, and

        \item $\Om$ is convex with respect to the $x_N$-axis, i.e., any line segment parallel to the $x_N$-axis joining two points in $\Om$ lies completely inside $\Om$.
    \end{enumerate}
\end{remark}
 
\begin{definition}[Steiner symmetric function]\label{Steiner_func}
Let $\Om\subset \mathbb{R}^N$ be a measurable set and $f:\Om\rightarrow\R$ be a nonnegative measurable function. Then the \textit{Steiner symmetrization} $f^\#$ of $f$ on $\Om^\#$ with respect to the the hyperplane $\{x_N=0\}$ is defined by
$$f^\#(x)=\sup\left\{c\in \R:x\in \{y\in \Om:f(y)\geq c\}^\#\right\}, \;\text{for}\;x\in \Om^\#.$$
Let $\Om=\Om^\#$. If $f=f^\#$ in $\Om$, then $f$ is called \textit{Steiner symmetric} with respect to the hyperplane $\{x_N=0\}$.
\end{definition}
 Next, we give a characterization of Steiner symmetric domains and Steiner symmetric functions using polarization; cf. \cite[Lemma 6.3]{BrockSolynin}.
 \begin{proposition}\label{Steiner_char}
 Let $\Om$ and $f$ be as in Definition \ref{Steiner_func}. Also, let $\H_*\subset \H$ be the collection of all half-spaces $H$ such that $H$ contains the hyperplane
 $\{x_N=0\}$ and $\pa H$ is parallel to the hyperplane $\{x_N=0\}$. Then the following holds:
 \begin{enumerate}[(i)]
     \item $\Om=\Om^\#$ if and  only if $\Om=\Om_H,$ for all $H\in \H_*$.
     \item if $\Om=\Om^\#$, then $f$ is Steiner symmetric with respect to the hyperplane $\{x_N=0\}$ if and  only if $f=f_H,$ for all $H\in \H_*$.
 \end{enumerate}
 \end{proposition}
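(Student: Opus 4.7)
The plan is to exploit the product structure of $\H_*$: every $H\in\H_*$ has $\pa H = \{x_N = c\}$ for some $c\neq 0$, so $\si_H$ acts only on the $x_N$-coordinate. A direct check from Definition \ref{pola_def} then gives the slice identities
\[
    (\Om_H)_{x'} = (\Om_{x'})_{I_c}, \qquad f_H(x',\,\cdot\,) = \big(f(x',\,\cdot\,)\big)_{I_c},\quad x'\in \R^{N-1},
\]
where $I_c\subset\R$ is the one-dimensional open half-line corresponding to $H$, namely $(c,\infty)$ if $c<0$ or $(-\infty,c)$ if $c>0$. This reduces the proposition to a one-dimensional statement: a nonnegative measurable function on $\R$ (or a measurable subset of $\R$) equals its Steiner rearrangement about $0$ if and only if it is invariant under every such one-dimensional polarization.

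For (i), the forward direction is a slice-wise computation. If $\Om=\Om^\#$, each slice $\Om_{x'}$ is (up to measure zero) a symmetric open interval $(-\ell(x'), \ell(x'))$, and its polarization with respect to $(c, \infty)$ (for $c<0$) coincides with itself modulo a null set, giving $\Om_H=\Om$. For the reverse direction, I would use the equivalence $\Om = \Om_H \Longleftrightarrow \si_H(\Om\cap\overline{H}^c)\subset \Om\cap H$, immediate from Definition \ref{pola_def}-$(i)$. Applied to $H=\{x_N > c\}$, this says that if $(x', x_N)\in\Om$ with $x_N\leq c<0$, then also $(x', 2c-x_N)\in\Om$. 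Varying $c$ over $[x_N, 0)$ while holding $x_N$ fixed sweeps out the segment $\{(x', y) : y\in[x_N, -x_N)\}\subset \Om$; the symmetric inclusion from $H=\{x_N< c\}$, $c>0$, covers points with $x_N>0$. Together these force each slice to be a symmetric interval about $0$, i.e., $\Om=\Om^\#$ by Remark \ref{RemS}.

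For (ii), I would pass to super-level sets. A short computation from Definition \ref{pola_def}-$(ii)$ shows that polarization commutes with strict super-level sets: for every $t\in\R$,
\[
\{x\in\Om : f_H(x) > t\} = \{x\in\Om : f(x)>t\}_H.
\]
Hence $f=f_H$ for every $H\in\H_*$ if and only if every super-level set $\{f>t\}$ is $H$-invariant for every $H\in\H_*$, which by part (i) is equivalent to every $\{f>t\}$ being Steiner symmetric about $\{x_N=0\}$. The layer-cake representation $f(x) = \int_0^\infty \chi_{\{f>t\}}(x)\,\d t$ then gives the equivalence with $f = f^\#$.

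The main obstacle, I expect, is the reverse direction of (i): extracting simultaneously the $\{x_N=0\}$-symmetry and the $x_N$-convexity of each slice from $\Om = \Om_H$ for all $H\in\H_*$. The argument relies on sliding $\pa H$ arbitrarily close to $\{x_N=0\}$ from both sides and needs to be handled up to sets of measure zero. This is essentially \cite[Lemma 6.3]{BrockSolynin}, which I would follow closely.
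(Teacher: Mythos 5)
The paper does not prove this proposition at all: it is quoted from \cite[Lemma~6.3]{BrockSolynin}, so there is no in-text argument to compare against. Your reconstruction is a correct and essentially self-contained proof of the cited lemma, and the route (reduction to one-dimensional slices, the equivalence $\Om=\Om_H\Leftrightarrow\si_H(\Om\cap\overline{H}^c)\subset\Om\cap H$, the sweeping of $c$ towards $0$, and the passage to super-level sets plus layer-cake for part (ii)) is the standard one. Two technical points deserve explicit care if you write this out in full. First, in part (ii) you apply part (i) to the super-level sets $\{f>t\}$, but these are merely measurable sets, not open domains; your sweeping argument then only shows that each slice $\{f>t\}_{x'}$ is a symmetric interval up to its two endpoints, so the conclusion is $\{f>t\}=\{f>t\}^\#$ modulo null sets, and the final identity $f=f^\#$ holds a.e.\ via Fubini (with a measurability check on $x'\mapsto|\{f>t\}_{x'}|_1$). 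Second, Definition \ref{Steiner_func} builds $f^\#$ from the closed super-level sets $\{f\ge c\}^\#$, while your identity $\{f_H>t\}=\{f>t\}_H$ uses strict ones; you need the standard fact that $\{f^\#>t\}$ and $\{f>t\}^\#$ agree up to null sets before layer-cake closes the argument. Neither point is a genuine obstruction, but both are exactly the places where "up to measure zero" must be tracked.
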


\subsubsection{Foliated Schwarz symmetry}\label{fol_sub}
In this subsection, we define the foliated Schwarz symmetry of a measurable function. First, we describe the following collections of half-spaces in $\R^N$:
\begin{align*}
    \H_0 &=\{H\in \H: 0\in \overline{H}\},\;\;\;\quad\widehat{\H}_0=\{H\in \H_0: 0\in \pa H\}, \\
    \H(\ga) &=\{H\in \H:\ga\in H\}, \; \, \widehat{\H}_0(\ga)=\{H\in\widehat{\H}_0 :\ga\in H\},\;\text{for}\;\ga\in \R^N.
\end{align*}

Next, we define the foliated Schwarz symmetrization of a function on radial domains; cf. \cite[Definition 3.3]{Brock2020}.

\begin{definition}[Foliated Schwarz symmetrization]\label{Foliated_sz} Let $\Om\subset\R^N$ be a radial domain with respect to 0 and  $f:\Om\rightarrow \R$ be a nonnegative measurable function. Then the \textit{foliated Schwarz symmetrization} $f^*$ of $f$ with respect to a vector $\ga\in \S^{N-1}$ is the function satisfying the following properties:
\begin{enumerate}[(i)]
     \item $f^*(x)=h(r,\theta)$, $\forall\; x\in \Om$, for some function $h:[0,\infty)\times [0,\pi)\rightarrow\R$, which is decreasing in $\theta$, where $(r,\theta):=\big(|x|,{\rm{arccos}}(\frac{x\cdot \ga}{|x|})\big)$.
     \item for $a,b\in\mathbb{R}$ with $a<b$ and $r\geq 0$, $$|\{x:|x|=r,\; a<f(x)\leq b\}|_{N-1}=|\{x:|x|=r,\; a<f^*(x)\leq b\}|_{N-1},$$
     where $|\cdot|_{N-1}$ denotes the $(N-1)$-dimensional Lebesgue measure.
\end{enumerate}
\end{definition}

Now, we state the definitions of foliated Schwarz symmetry of a function defined on a ball or annular domains. Recall that, $\B_t=B_R(0)\setminus \overline{B_r(te_1)},$ for $0\leq t<R-r.$

\begin{definition}[Foliated Schwarz symmetric function]\label{Foliated_symmetry}
Let $\B_t$ be as given above.
\begin{enumerate}[(i)]
    \item Let $\Om\subset \R^N$ be a radial domain (either ball or $\B_0$) with respect to 0. Then a nonnegative measurable function $f:\Om\rightarrow \R$ is said to be \textit{foliated Schwarz symmetric} in $\Om$ with respect to a vector $\ga\in \S^{N-1}$ if $f=f^*$.

    \item For $0<t< R-r$, let $f:\B_t\rightarrow\R$ be a nonnegative measurable function. Define $\tilde{f}:B_R(0)\rightarrow \R$ as follows
    \[
        \tilde{f}(x)= \left\{
        \begin{array}{@{}l@{\thinspace}l}
        f(x), \;\;& \text{if}\; x \in \B_t,\\
       0, \;\;& \text{if}\  x \in \overline{B_r(te_1)}.
     \end{array}
        \right.
        \]
    Then $f$ is called \textit{foliated Schwarz symmetric} in $\B_t$ with respect to $-e_1$ if $\tilde{f}$ is foliated Schwarz symmetric in $B_R(0)$ with respect to $-e_1$.
\end{enumerate}
\end{definition}

From the above definitions, it follows that if $f$ is foliated Schwarz symmetric with respect to $\ga\in \S^{N-1}$, then $f$ is axially symmetric with respect to the axis $\R \ga$ and decreasing in the polar angle $\theta={\rm{arccos}}\big(\frac{x\cdot\ga}{|x|}\big)$. Now, we state a characterization of foliated Schwarz symmetry in terms of polarization. 

\begin{proposition} \label{Foliated_char}
Let $\Om\subset\R^N$ be a radial domain with respect to 0 and $f\in L^p(\Om)$ $(1\leq p<\infty)$ be nonnegative. 
\begin{enumerate}[(i)]
    \item If for every $H\in \widehat{\H}_0$, either $f_H=f$ or $f^H=f$ in $\Om$, then there exists $\ga\in \S^{N-1}$ such that $f$ is foliated Schwarz symmetric in $\Om$ with respect to $\ga$.
    
    \item Let $\beta\in \S^{N-1}$. Then $f$ is foliated Schwarz symmetric in $\Om$ with respect to $\beta$ if and only if $f_H=f$ in $\Om,$ for all $H\in \widehat{\H}_0(\beta)$.
\end{enumerate}
\end{proposition}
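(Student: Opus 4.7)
My plan is to prove (ii) first (the forward direction by direct computation, the converse by two steps: axial symmetry via continuity/approximation, and polar-angle monotonicity via an explicit reflection), and then to deduce (i) from (ii) by a geometric argument on the sphere.

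For the forward direction of (ii), write $f(x)=h(|x|,\theta(x))$ with $\theta(x)=\arccos(x\cdot\be/|x|)$ and $h(r,\cdot)$ non-increasing on $[0,\pi]$. For $H\in\widehat{\H}_0(\be)$ with inward unit normal $\nu$, we have $\nu\cdot\be>0$ (since $\be\in H$) and $|\si_H(x)|=|x|$ (since $0\in\pa H$). For $x\in H$, the identity
\begin{equation*}
\si_H(x)\cdot\be = x\cdot\be-2(x\cdot\nu)(\nu\cdot\be)<x\cdot\be
\end{equation*}
gives $\theta(\si_H(x))>\theta(x)$, so $f(x)\geq f(\si_H(x))$, and Remark \ref{Some_remarks}(i) yields $f_H=f$. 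For the converse, I would derive axial symmetry first: for $H^*\in\widehat{\H}_0$ with $\be\in\pa H^*$ and inward unit normal $\nu^*$, approximate by $H_n,H_n'\in\widehat{\H}_0(\be)$ whose inward normals converge to $\nu^*$ and $-\nu^*$, respectively. Since polarization is an $L^p$-contraction and converges pointwise for continuous compactly supported test functions, $H\mapsto f_H$ is $L^p$-continuous by a density argument; hence $f_{H^*}=\lim f_{H_n}=f$ and $f_{\overline{H^*}^c}=\lim f_{H_n'}=f$. Using that $\Om$ is radial and Remark \ref{Some_remarks}(ii), the latter equals $f^{H^*}=f$, forcing $f=f\circ\si_{H^*}$ and thus invariance under every reflection fixing the axis $\R\be$. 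For the polar-angle monotonicity, take $x,y\in\Om$ with $|x|=|y|$ and $\theta(x)<\theta(y)$, and set $H=\{z:z\cdot(x-y)>0\}$. Since $|x|=|y|$, the perpendicular bisector of $[x,y]$ passes through $0$, so $0\in\pa H$; since $\be\cdot(x-y)=|x|(\cos\theta(x)-\cos\theta(y))>0$, also $\be\in H$. A short calculation using $|x|=|y|$ gives $\si_H(x)=y$, so the hypothesis $f_H=f$ forces $f(x)\geq f(y)$, which together with the axial symmetry completes the proof of foliated Schwarz symmetry.

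For (i), define $S=\{v\in\S^{N-1}:f_{H_v}=f\}$ with $H_v=\{x:x\cdot v>0\}$. Noting that $\overline{H_v}^c=H_{-v}$ modulo a null set and using Remark \ref{Some_remarks}(ii) (applicable since $\Om$ is radial), the hypothesis translates into $S\cup(-S)=\S^{N-1}$. The $L^p$-continuity proved in (ii) shows $S$ is closed. The crucial structural step is to establish that $S$ is \emph{geodesically convex} on $\S^{N-1}$: if $v_1,v_2\in S$ are not antipodal, then every $v$ on the shorter geodesic from $v_1$ to $v_2$ also lies in $S$. Given this, $S$ is a closed intersection of closed spherical caps with $|S|\geq|\S^{N-1}|/2$ (from the inclusion-exclusion bound applied to $S\cup(-S)=\S^{N-1}$), so $S$ must be either all of $\S^{N-1}$ (in which case $f$ is radial and hence foliated Schwarz symmetric with respect to any $\ga$) or exactly one closed hemisphere $\{v:v\cdot\ga\geq 0\}$; the conclusion then follows from part (ii) applied to $\ga$.

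The main obstacle is the geodesic convexity of $S$ in part (i). Its proof relies on the rigidity of simultaneous polarization invariance: from $f_{H_{v_1}}=f_{H_{v_2}}=f$, one must deduce $f_{H_v}=f$ for intermediate $v$ using composition/commutation identities for polarizations (cf.\ \cite{BrockSolynin}) together with the closure of $S$ to exclude isolated defects along the geodesic arc, adapting the local-setting reasoning of \cite{UNM} to the nonlocal framework used here.
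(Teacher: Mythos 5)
Your part (ii) is essentially sound and follows the classical route (the paper simply cites \cite{Weth2010} and \cite{Brock2020} here): the forward direction via the identity $\si_H(x)\cdot\be=x\cdot\be-2(x\cdot\nu)(\nu\cdot\be)$ is correct, and the converse via approximation of boundary half-spaces plus the two-point reflection $\si_H(x)=y$ for $|x|=|y|$ is the standard argument. The one technical point you gloss over in (ii) is precisely the one the paper flags: your monotonicity step argues pointwise with individual $x,y$, which is fine for continuous $f$ but not directly meaningful for an $L^p$ function defined only a.e.; one must either approximate $f$ by continuous functions compatibly with polarization or run the argument on level sets, as in \cite[Theorem 3.5]{Brock2020}. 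This is repairable but should be acknowledged as a step, not elided.

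Part (i) contains a genuine gap. Your reduction of the hypothesis to $S\cup(-S)=\S^{N-1}$ with $S=\{v:f_{H_v}=f\}$ closed is fine, but the entire argument then rests on the geodesic convexity of $S$, which you do not prove and which is not a standard lemma you can simply invoke: knowing $f_{H_{v_1}}=f$ and $f_{H_{v_2}}=f$ does not, by any known composition identity for polarizations, yield $f_{H_v}=f$ for intermediate $v$, and the concluding paragraph of your proposal is an acknowledgement of this rather than an argument. The established proof of (i) (Brock, Smets--Willem; this is what \cite[Theorem 3.5]{Brock2020} does) avoids any such structural claim about $S$: one fixes a strictly increasing weight $w$ and chooses $\ga\in\S^{N-1}$ maximizing $v\mapsto\into f(x)\,w(x\cdot v)\,\dx$; for $H\in\widehat{\H}_0(\ga)$ the dichotomy ``$f_H=f$ or $f^H=f$'' combined with the Hardy--Littlewood inequality and the maximality of $\ga$ forces $f_H=f$ (otherwise the reflected direction $\si_H(\ga)$ would strictly increase the functional), and then part (ii) applies. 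You should replace the convexity step by this variational selection of $\ga$, or else supply a proof of convexity, which I do not believe is available.
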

\begin{proof}
    $(i)$ For a proof, we refer to \cite[Theorem 3.5]{Brock2020}.
    
    $(ii)$ The conclusion follows from \cite[Proposition 2.4]{Weth2010} for continuous functions. However, we can easily obtain the same result for measurable functions by using a similar approach as described in \cite[Theorem 3.5]{Brock2020}. We skip the detailed proof here.
\end{proof}

\begin{corollary}\label{Foliated_noncon}
Let $f:\B_t\rightarrow\R$ be a nonnegative measurable function. Then $f$ is foliated Schwarz symmetric in $\B_t$ with respect to $-e_1$ if and only if $f_H=f\;\text{in}\;\B_t,$ for all $H\in \widehat{\H}_0(-e_1).$
\end{corollary}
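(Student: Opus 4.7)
The plan is to reduce this statement to Proposition \ref{Foliated_char}-(ii) through the extension construction built into Definition \ref{Foliated_symmetry}-(ii). Let $\tilde f\colon B_R(0)\to\R$ denote the extension of $f$ by zero on $\overline{B_r(te_1)}$. By Definition \ref{Foliated_symmetry}-(ii), $f$ is foliated Schwarz symmetric in $\B_t$ with respect to $-e_1$ if and only if $\tilde f$ is foliated Schwarz symmetric in the radial ball $B_R(0)$ with respect to $-e_1$; and Proposition \ref{Foliated_char}-(ii) says the latter is equivalent to $\tilde f_H=\tilde f$ in $B_R(0)$ for every $H\in\widehat{\H}_0(-e_1)$. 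So the corollary follows once we establish, for each such $H$, the equivalence
\[
\tilde f_H=\tilde f\;\text{in}\;B_R(0) \iff f_H=f\;\text{in}\;\B_t.
\]

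Observe that $f_H$ and $\tilde f_H$ are, per Definition \ref{pola_def}-(ii), the restrictions of the polarization of one and the same zero extension to $\R^N$; in particular $\tilde f_H|_{\B_t}=f_H$, which takes care of the forward direction. For the reverse direction, assuming $f_H=f$ in $\B_t$, the only remaining task is to check that $\tilde f_H=0$ on $\overline{B_r(te_1)}$. At this point I will invoke the geometric fact, recorded in the discussion preceding the corollary, that $(\B_t)_H=\B_t$ for every $H\in\widehat{\H}_0(-e_1)$ (this holds because $-e_1\in H$ together with $0\in\pa H$ forces $te_1\in H^c$, so the hole sits on the ``correct'' side and polarization leaves $\B_t$ unchanged). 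Lemma \ref{Pola_domain}-(i) applied to $\Om=\B_t$ then gives $\si_H(\B_t^c\cap H)\subset \B_t^c\cap H^c$, and since $\si_H$ preserves $B_R(0)$ this specializes to $\si_H(\overline{B_r(te_1)}\cap H)\subset \overline{B_r(te_1)}$.

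Thus for any $x\in\overline{B_r(te_1)}$ we have $\tilde f(x)=0$, and additionally $\tilde f(\si_H(x))=0$ whenever $x\in H$. Plugging this into Definition \ref{pola_def}-(ii) and using $\tilde f\geq 0$ yields $\tilde f_H(x)=0$ in both the $x\in H$ and $x\in H^c$ cases, which combined with the hypothesis on $\B_t$ gives $\tilde f_H=\tilde f$ on all of $B_R(0)$. The only nonroutine ingredient is the polarization-invariance $(\B_t)_H=\B_t$; once that geometric fact is in hand, the remainder is purely bookkeeping between the two zero extensions. I would expect the invariance itself to be quoted from the preceding discussion rather than reproved here, since its direct verification from Definition \ref{pola_def}-(i) reduces to a short check that the reflected hole $\si_H(\overline{B_r(te_1)})$ sits inside the union $\overline{B_r(te_1)}\cup\si_H(H)$, driven by $te_1\in H^c$.
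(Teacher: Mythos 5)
Your proof is correct and follows essentially the same route as the paper: reduce via Definition \ref{Foliated_symmetry}-(ii) and Proposition \ref{Foliated_char}-(ii) to the statement $\tilde f_H=\tilde f$ on $B_R(0)$, and then check that nothing happens on the hole $\overline{B_r(te_1)}$. In fact you are more careful than the paper at the one nontrivial point: the paper simply asserts that $\tilde f_H=\tilde f$ on $\overline{B_r(te_1)}$ ``by Definition \ref{Foliated_symmetry}-(ii)'', whereas you correctly identify that this requires the polarization invariance $(\B_t)_H=\B_t$ together with Lemma \ref{Pola_domain}-(i) to ensure $\si_H(\overline{B_r(te_1)}\cap H)\subset\overline{B_r(te_1)}$, so the $x\in H$ case of the polarization also vanishes.
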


\begin{proof}
    Applying Definition \ref{Foliated_symmetry}-$(ii)$ and Proposition \ref{Foliated_char}-$(ii)$,  we get that $f:\B_t\rightarrow\R$ is foliated Schwarz symmetric in $\B_t$ with respect to $-e_1$ if and only if $\tilde{f}_H=\tilde{f}\;\text{in}\;B_R(0),\;\forall\,H\in \widehat{\H}_0(-e_1)$. 
    Observe that, by Definition \ref{Foliated_symmetry}-$(ii)$, we always have $\tilde{f}_H=\tilde{f}$ in $\overline{B_r(te_1)},$ for all $H\in\widehat{\H}_0(-e_1)$. As a consequence, $f$ is foliated Schwarz symmetric in $\B_t$ with respect to $-e_1$ if and only if $\tilde{f}_H=\tilde{f}\;\text{in}\;B_R(0)\setminus\overline{B_r(te_1)}=\B_t$, i.e., $$f_H=f\;\text{in}\;\B_t,\;\text{for all}\; H\in \widehat{\H}_0(-e_1).$$
\end{proof}

\section{Proofs of the main results}\label{main_results}
We establish our main results in this section. First, we prove the existence of the minimizing weights in \eqref{min_prob} (Theorem \ref{min_theo}) and provide some characterizations of these weights in terms of the associated first eigenfunctions of \eqref{Problem}. Next, we prove various symmetry results (Theorem \ref{steiner_theo}, Corollary \ref{rad_theo} and Theorem \ref{foliated_theo}) concerning the minimizing weights and the eigenfunctions.

\subsection{Existence and characterization} 
We start this subsection with some useful consequences of rearrangements that are required to establish the main results. 

Recall that, $\E(f)$ denotes the collection of all rearrangements of a measurable functions $f:\Om\rightarrow\R.$ For $f\in L^p(\Om),$ where $p\in[1,\infty),$ let $\overline{\E(f)}$ be the weak closure of $\E(f)$ in $L^p(\Om).$

\begin{lemma}\label{rearrange_p}
Let $f_0 \in L^p(\Om),\;\text{where}\;p \in [1, \infty)$. Then the following statements hold:
\begin{enumerate}[(i)]
    \item If $f \in \E(f_0)$, then $\norm{f}_p = \norm{f_0}_p$ and $f^{\pm} \in \E(f_0^{\pm})$.
    
    \item Let $h \in L^{\p}(\Om)$. Then there exists $\phi,\psi \in \E(f_0)$ such that 
\begin{align*}
    \into \phi(x) h(x) \, \dx \le \into f(x) h(x) \, \dx \le \into \psi(x) h(x) \, \dx,\;\; \forall \, f \in  \overline{\E(f_0)}.
\end{align*}

\end{enumerate}
\end{lemma}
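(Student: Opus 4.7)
The plan is to handle the two parts separately. Part (i) is a direct manipulation of distribution functions, while part (ii) is a standard application of the Ryff--Burton theory of rearrangements.

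For part (i), I would argue as follows. By definition, $f\in\E(f_0)$ means $|\{f>t\}|=|\{f_0>t\}|$ for every $t\in\R$. Replacing $t$ by $-t$ and passing to complements yields $|\{f\leq -t\}|=|\{f_0\leq -t\}|$. Since for $t\geq 0$
\begin{equation*}
\{|f|>t\}=\{f>t\}\cup\{f<-t\}
\end{equation*}
as a disjoint union, and the analogous decomposition holds for $f_0$, the distribution functions of $|f|$ and $|f_0|$ agree on $[0,\infty)$. The layer-cake identity
\begin{equation*}
\into |f(x)|^p\dx = p\int_0^{\infty} t^{p-1}|\{|f|>t\}|\dt
\end{equation*}
then gives $\norm{f}_p=\norm{f_0}_p$. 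For $f^+\in\E(f_0^+)$, I would verify directly that the distribution functions agree: for $t\geq 0$ one has $\{f^+>t\}=\{f>t\}$ and $\{f_0^+>t\}=\{f_0>t\}$, which have equal measure by hypothesis; for $t<0$, both sets equal $\Om$ since $f^+,f_0^+\geq 0$. The argument for $f^-$ is symmetric upon replacing $f$ by $-f$.

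For part (ii), I would invoke the classical theory of weak closures of rearrangement classes. Two ingredients suffice: (a) $\overline{\E(f_0)}$ is a convex, weakly compact subset of $L^p(\Om)$; (b) its set of extreme points coincides with $\E(f_0)$ (Ryff's theorem). Since $h\in L^{\p}(\Om)$, the linear functional $L(f):=\into f(x)h(x)\dx$ is weakly continuous on $L^p(\Om)$, and hence attains its supremum and infimum on the weakly compact set $\overline{\E(f_0)}$. By Bauer's maximum principle, these extremal values are attained at extreme points of $\overline{\E(f_0)}$, and by (b) such extreme points lie in $\E(f_0)$ itself. This produces $\phi,\psi\in\E(f_0)$ realizing the minimum and maximum of $L$ over $\overline{\E(f_0)}$, yielding the desired two-sided inequality.

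The main obstacle here is cosmetic rather than substantive. Part (i) is essentially bookkeeping with distribution functions and the layer-cake formula. In part (ii), the nontrivial inputs, namely the weak compactness of $\overline{\E(f_0)}$ and the identification of its extreme points, are well-established results due to Ryff and Burton, so I expect the paper to cite them rather than reprove them. No new ideas are needed beyond carefully invoking this standard machinery.
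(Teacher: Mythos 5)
Your proposal is correct and matches the paper's approach: the paper simply cites Burton's Lemma 2.1 for the norm/positive-part identities in (i) (whose proof is exactly your distribution-function and layer-cake bookkeeping) and Burton's Lemma 2.4 for (ii) (which rests on precisely the Ryff--Burton machinery you invoke: weak compactness and convexity of $\overline{\E(f_0)}$, identification of its extreme points with $\E(f_0)$, and attainment of linear functionals at extreme points). The only cosmetic point worth tightening is the passage from $|\{f\leq -t\}|=|\{f_0\leq -t\}|$ to the strict-inequality sets $\{f<-t\}$ used in your disjoint decomposition, which follows by a monotone limit in $t$.
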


\begin{proof}
 $(i)$ The first part of the statement follows from \cite[Lemma 2.1]{Burton}. The second part is a straightforward consequence of the definition of rearrangement.

 $(ii)$ For a proof, we refer to  \cite[Lemma 2.4]{Burton}.
\end{proof}

In order to prove the existence results, we need the following lemma. Recall that, the fractional semi-norm $[\cdot]_s$ is given by 
$$[u]_s=\left(\intRn\intRn\uxys\dx\dy\right)^{\frac{1}{2}},\;\text{for}\;u\in \Hsrn.$$

\begin{lemma}\label{un_bound}
Let $u\in \xsom$ and $V_0$ satisfies the assumptions of \eqref{assumption}. Then there exists $\al>0$ such that
\begin{equation*}
    \frac{\cns}{2}\uss+\into V(x)|u(x)|^2\dx\geq \al \uss,\;\text{for all} \;V\in \E(V_0).
\end{equation*}
\end{lemma}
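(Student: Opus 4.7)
The plan is to control the potentially negative contribution of $\into V|u|^2\,\dx$ by the part coming from $V^-$, and then absorb this into the coercive term $\frac{\cns}{2}[u]_s^2$ using the fractional Sobolev embedding together with assumption (A2). The crucial observation is that the bound on $\|V^-\|_q$ appearing in (A2) is preserved under rearrangement, which is precisely what will allow one to pick a single $\alpha$ that works uniformly over all $V \in \E(V_0)$.

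First I would note that for any $V \in \E(V_0)$, Lemma \ref{rearrange_p}(i) gives $V^- \in \E(V_0^-)$, and in particular $\|V^-\|_q = \|V_0^-\|_q \leq (\cns/2 - \delta)/C^2$ by (A2). Since $V \geq -V^-$ pointwise, I would then write
\begin{equation*}
    \into V(x)|u(x)|^2\,\dx \geq -\into V^-(x)|u(x)|^2\,\dx,
\end{equation*}
and apply H\"older's inequality with the conjugate pair $(q,q')$ to obtain
\begin{equation*}
    \into V^-(x)|u(x)|^2\,\dx \leq \|V^-\|_q\, \|u\|_{L^{2q'}(\Om)}^2.
\end{equation*}
Plugging in the fractional Sobolev embedding $\xsom \hookrightarrow L^{2q'}(\Om)$ with best constant $C$, which yields $\|u\|_{L^{2q'}(\Om)}^2 \leq C^2[u]_s^2$, gives
\begin{equation*}
    \frac{\cns}{2}[u]_s^2 + \into V(x)|u(x)|^2\,\dx \geq \left(\frac{\cns}{2} - C^2\|V^-\|_q\right)[u]_s^2 \geq \delta\, [u]_s^2,
\end{equation*}
so one can take $\alpha = \delta$, manifestly independent of the particular $V \in \E(V_0)$.

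The calculation is entirely routine and I do not foresee a real obstacle. The only point requiring some care is the exact interpretation of the embedding constant $C$: if it is defined relative to the full norm $\|\cdot\|_{\xsom}$ rather than the seminorm $[\cdot]_s$, then one first invokes the fractional Poincar\'e inequality on the bounded domain $\Om$ to replace $\|u\|_{\xsom}$ by a multiple of $[u]_s$; this changes the numerical value of the constant but not the structure of the argument. The quantitative choice of $\delta \in (0, \cns/2)$ in (A2) is precisely what guarantees that the coercivity constant $\cns/2 - C^2\|V^-\|_q$ is bounded below by a strictly positive number independent of $V \in \E(V_0)$.
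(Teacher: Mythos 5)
Your proposal is correct and follows essentially the same argument as the paper: Hölder's inequality with the pair $(q,q')$, the embedding $\xsom\hookrightarrow L^{2q'}(\Om)$ to absorb $\into V^-|u|^2\dx$ into the seminorm term, and the rearrangement invariance $\|V^-\|_q=\|V_0^-\|_q$ from Lemma \ref{rearrange_p}-$(i)$ to make the choice $\al=\de$ uniform over $\E(V_0)$. Your remark about the normalization of the embedding constant $C$ is a fair caution, but the paper uses $\|u\|_{2q'}^2\le C^2[u]_s^2$ exactly as you do in your main line.
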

\begin{proof}
      By the hypothesis, $V_0\in L^q(\Om),$ where $q>\frac{N}{2s}.$ Let $V\in \E(V_0).$ Then using H\"{o}lder inequality and the embedding $\xsom\hookrightarrow L^{2q'}(\Om)$, we get
      \begin{equation}\label{v-}
          \into V^-(x)|u(x)|^2\dx\leq ||V^-||_q||u||^2_{2q'}\leq C^2 ||V^-||_q \uss,\;\text{for}\;u\in \xsom.
      \end{equation}
      Since $V$ is a rearrangement of $V_0,$ by Lemma \ref{rearrange_p}-$(i)$, we have $V^-\in \E(V^-_0)$ and hence $||V^-||_r=||V_0^-||_r,$ for $r\geq 1.$ In particular, using the assumption, it follows that
      $$||V^-||_q\leq \frac{\frac{\cns}{2}-\de}{C^2},\;\text{for some}\;\de\in \left(0,\frac{\cns}{2}\right).$$ 
      Therefore, applying \eqref{v-}, we obtain
      \begin{align*}
          \frac{\cns}{2}\uss+\into V(x)|u(x)|^2\dx &\geq \frac{\cns}{2}\uss-\into V^-(x)|u(x)|^2\dx\\
          &\geq \left\{\frac{\cns}{2}-C^2 ||V^-||_q\right\} \uss\\
          &\geq \de \uss,\;\text{for}\;u\in \xsom.
      \end{align*}
      Hence the conclusion follows by taking $\al=\de.$
\end{proof}

Now, we are in a stage to prove Theorem \ref{min_theo}.
\begin{proof}[Proof of Theorem \ref{min_theo}]
    Let $g_0,V_0$ be as stated in \eqref{assumption}, i.e., 
\begin{equation*}
\left\{\begin{aligned}
    & \;g_0,V_0\in L^q(\Om),\;\text{where}\;q>\frac{N}{2s},\\
    & \;g_0^{+}\not\equiv 0,\;||V_0^{-}||_q\leq \frac{\frac{\cns}{2}-\de}{C^2},\;\text{for some}\;\de\in \left(0,\frac{\cns}{2}\right).
\end{aligned}\right.
\end{equation*}
By the definition \eqref{min_prob} of $\la_1^{\text{min}}(g_0,V_0),$    we have
    \begin{equation}
        \la_1^{\text{min}}(g_0,V_0)=\min\{\la_1(g,V): g\in \E(g_0), V\in \E(V_0)\}.
    \end{equation}
    For the notational simplicity, we write $\La_1:=\la_1^{\text{min}}(g_0,V_0).$ Let $(g_n,V_n)\in \E(g_0)\times \E(V_0)$ be a minimizing sequence for $\La_1,$ i.e.,
    \begin{equation}\label{lamin}
        \La_1=\displaystyle\lim_{n\rightarrow \infty} \la_1(g_n,V_n).
    \end{equation}
    Observe that, $g_n$ and $V_n$ satisfy all the assumptions as $g_0$ and $V_0,$ respectively (using Lemma \ref{rearrange_p}-$(i)$). Therefore, Theorem \ref{Eigen_exist} implies that there exists a sequence $(u_n)\in \xsom$ consisting of positive first eigenfunctions associated to $(g_n,V_n)$ and it can be chosen in such a way that $\into g_n(x)(u_n(x))^2\dx=1,$ for all $n\geq 1.$ Hence we have
    \begin{equation}\label{gnun}
    \la_1(g_n,V_n)=\frac{\cns}{2}[u_n]_s^2+\into V_n(x)(u_n(x))^2\dx,\;\text{for all}\;n\geq 1.
    \end{equation}
Now, Lemma \ref{un_bound} infers that the sequence $(u_n)$ is bounded in $\xsom.$ Therefore, using the reflexivity of $\xsom$ (since it is Hilbert space by Theorem \ref{Sobolev_theo}-$(i)$), we get a subsequence $(u_{n_k})$ of $(u_n)$ and $u\in \xsom$ such that 
$$u_{n_k}\rightharpoonup u\;\text{in}\;\xsom.$$
Since $q>\frac{N}{2s},$ we get $2q'<2_s^*,$ where $2_s^*=\frac{2N}{N-2s}$. Therefore, we apply the compactness (Theorem \ref{Sobolev_theo}-$(ii)$) of the embedding $\xsom\hookrightarrow L^{2q'}(\Om)$ to obtain 
\begin{equation}\label{unk}
u_{n_k}\rightarrow u\;\text{in}\;L^{2q'}(\Om).
\end{equation}
Moreover, we have $u> 0$ a.e. in $\Om.$ On the other hand, since $(g_{n_k},V_{n_k})\in \E(g_0)\times \E(V_0),$ the following holds by Lemma \ref{rearrange_p}-$(i)$: $$||g_{n_k}||_q=||g_0||_q\;\text{and\;}||V_{n_k}||_q=||V_0||_q,\;\text{for all}\;k\geq 1.$$
Thus using the reflexivity of $L^{2q'}(\Om),$ up to a subsequence, we get
\begin{equation}\label{weak}
    g_{n_k}\rightharpoonup \hat{g}\;\text{and}\; V_{n_k}\rightharpoonup \hat{V}\;\text{in}\;L^{2q'}(\Om).
\end{equation}
Observe that, $(\hat{g},\hat{V})\in \overline{\E(g_0)}\times \overline{\E(V_0)}$, where $\overline{\E(g_0)}$ and $\overline{\E(V_0)}$ are weak closures of $\E(g_0)$ and $\E(V_0),$ respectively, in $L^{2q'}(\Om).$ Now, using \eqref{unk} and \eqref{weak}, we arrive at 
\begin{equation}\label{inteq}
\begin{aligned}
    \displaystyle\lim_{k\rightarrow \infty} \into g_{n_k}(x) (u_{n_k}(x))^2\dx &=\into \hat{g}(x) (u(x))^2\dx=1\\
    \text{and}\;\lim_{k\rightarrow \infty} \into V_{n_k}(x) (u_{n_k}(x))^2\dx &=\into \hat{V}(x) (u(x))^2\dx.
\end{aligned}
\end{equation}
Applying both \eqref{inteq} and \eqref{gnun} in \eqref{lamin}, and using the lower semi-continuity of the fractional semi-norm $[\cdot]_s$, we obtain
\begin{align}\label{lowersemi}
\La_1 &=\displaystyle\lim_{k\rightarrow \infty}\left\{\frac{\cns}{2}[u_{n_k}]_s^2+\into V_{n_k}(x)(u_{n_k}(x))^2\dx\right\}\nonumber\\
& \geq \frac{\cns}{2}[u]_s^2+\into \hat{V}(x)(u(x))^2\dx.
\end{align}
Now, we use Lemma \ref{rearrange_p}-$(ii)$ to get  $(\ug,\uv)\in \E(g_0)\times \E(V_0)$ satisfying the following:
\begin{equation}\label{burt}
    \into \hat{g}(x) (u(x))^2\dx\leq \into \ug(x) (u(x))^2\dx,\;\;\into \hat{V}(x)(u(x))^2\dx\geq \into \uv(x)(u(x))^2\dx.
\end{equation}
Substituting the above estimates in \eqref{lowersemi} and using the definition \eqref{min_prob} of $\La_1$, we deduce
\begin{align}
    \La_1\geq \frac{\cns}{2}[u]_s^2+\into \hat{V}(x)(u(x))^2\dx\geq \frac{\frac{\cns}{2}[u]_s^2+\into \uv(x)(u(x))^2\dx}{\into \ug(x) (u(x))^2\dx}\geq \La_1.
\end{align}
Hence $\La_1=\la_1^{\text{min}}(g_0,V_0)=\la_1(\ug,\uv).$ This completes the proof.
\end{proof}

In the following, we provide a characterization of the minimizers in \eqref{min_prob}, given by Theorem \ref{min_theo}, in terms of the associated first eigenfunctions of \eqref{Problem}.

\begin{proposition}\label{Repre}
    Let $\ug\in \E(g_0)$ and $\uv \in \E(V_0)$ be the minimizers of $\normalfont{\lmgv}$ as given by Theorem \ref{min_theo}. Suppose $u$ is a positive first eigenfunction of \eqref{Problem} associated to $\ug$ and $\uv.$ Then the following statements hold:
    \begin{enumerate}[(i)]
        \item There exists an increasing function $\phi:\R\rightarrow \R$ such that $$\ug =\phi \circ u\;\;a.e.\;\text{in}\;\Om.$$

        \item There exists a decreasing function $\psi:\R\rightarrow \R$ such that $$\uv =\psi \circ u\;\;a.e.\;\text{in}\;\Om.$$
    \end{enumerate}
\end{proposition}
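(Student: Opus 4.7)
The plan is to reduce each statement to a classical characterization (Burton's theorem) of extremizers of linear functionals over rearrangement classes. The crucial reduction is that, because the positive first eigenfunction $u$ attains $\la_1(\ug,\uv)$, minimality of the pair $(\ug,\uv)$ forces $\ug$ to \emph{maximize} the linear functional $g\mapsto \into g(x) u(x)^2\,\dx$ over $\E(g_0)$ and $\uv$ to \emph{minimize} $V\mapsto \into V(x) u(x)^2\,\dx$ over $\E(V_0)$.

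First I would set $\mathcal{N}:=\frac{\cns}{2}[u]_s^2+\into \uv(x) u(x)^2\,\dx$, so that $\la_1(\ug,\uv)=\mathcal{N}\big/\into \ug u^2\,\dx$; by Lemma~\ref{un_bound} we have $\mathcal{N}>0$ and hence $\into \ug u^2\,\dx>0$. For any $g\in\E(g_0)$ with $\into g u^2\,\dx>0$, plugging $u$ into the Rayleigh quotient defining $\la_1(g,\uv)$ gives $\la_1(g,\uv)\leq \mathcal{N}\big/\into g u^2\,\dx$; combining with $\la_1(\ug,\uv)\leq \la_1(g,\uv)$ (minimality, Theorem~\ref{min_theo}) yields $\into g u^2\,\dx\leq \into \ug u^2\,\dx$. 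The case $\into g u^2\,\dx\leq 0$ is then immediate since $\into \ug u^2\,\dx>0$. A parallel argument, now holding $\ug$ fixed and varying $V$ in $\E(V_0)$, gives $\into \uv u^2\,\dx\leq \into V u^2\,\dx$ for every $V\in \E(V_0)$.

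Next I would invoke the classical characterization of Burton (cf.\ \cite{Burton}, already used in Lemma~\ref{rearrange_p}): if $f_0\in L^q(\Om)$, $h\in L^{q'}(\Om)$, and $f^{\ast}\in\E(f_0)$ maximizes (respectively minimizes) $\into f h\,\dx$ over $\E(f_0)$, then there exists an increasing (respectively decreasing) function $F:\R\to\R$ such that $f^{\ast}=F\circ h$ a.e.\ in $\Om$. Since Theorem~\ref{Sobolev_theo} yields $u\in L^{2q'}(\Om)$, we have $h:=u^2\in L^{q'}(\Om)$, so the step above places $\ug$ and $\uv$ exactly in the setting of Burton's theorem. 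This furnishes an increasing $\tilde\phi$ and a decreasing $\tilde\psi$ with $\ug=\tilde\phi\circ u^2$ and $\uv=\tilde\psi\circ u^2$ a.e.\ in $\Om$. Since $u>0$ a.e.\ by Theorem~\ref{Eigen_exist}-$(ii)$ and $t\mapsto t^2$ is strictly increasing on $(0,\infty)$, the definitions $\phi(t):=\tilde\phi(t^2)$ and $\psi(t):=\tilde\psi(t^2)$ preserve monotonicity and give the desired representations $\ug=\phi\circ u$, $\uv=\psi\circ u$.

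The main delicate point is the sharp form of Burton's representation: the monotone function representing an extremizer is uniquely determined only up to the level sets of $h=u^2$ of positive measure, so $F$ must be extracted via a measurable selection from the monotone rearrangement. I would address this by following Burton's original construction, which first defines $F$ on the essential range of $h$ by matching distribution functions and then extends $F$ monotonically to all of $\R$; this produces a bona fide monotone function independent of the ambiguity on level sets, which is precisely what the statement requires.
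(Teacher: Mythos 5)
There is a genuine gap at the point where you invoke Burton's representation theorem. The version you state --- ``if $f^{\ast}\in\E(f_0)$ maximizes $\into f h\,\dx$ over $\E(f_0)$, then $f^{\ast}=F\circ h$ a.e.\ for some increasing $F$'' --- is false as stated: \cite[Theorem 5]{Burton1987} requires $f^{\ast}$ to be the \emph{unique} extremizer of the linear functional over the rearrangement class. When $h=u^2$ has a level set of positive measure, extremizers are typically non-unique (any rearrangement of an extremizer within a level set of $h$ is again an extremizer), and a generic extremizer is then not a function of $h$ at all; for instance, with $h=\chi_A$ a maximizer merely places the largest values of $f_0$ on $A$ in arbitrary order, and is of the form $F\circ\chi_A$ only if it is two-valued. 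Your closing paragraph addresses the wrong delicacy: the ambiguity is not in the choice of the monotone function $F$ on level sets of $h$, but in the extremizer itself, and no measurable-selection device turns an arbitrary maximizer into a monotone function of $h$.

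The paper closes exactly this gap with a PDE step that your argument is missing: it proves that $\ug$ is the \emph{unique} maximizer of $J(g)=\into g(x)u(x)^2\,\dx$ over $\E(g_0)$. If $h\in\E(g_0)$ were another maximizer, the chain $\la_1(\ug,\uv)\le\la_1(h,\uv)\le \mathcal{N}\big/\into h u^2\,\dx=\mathcal{N}\big/\into \ug u^2\,\dx=\la_1(\ug,\uv)$ forces equality throughout, so $u$ is also a first eigenfunction for the pair $(h,\uv)$; subtracting the two weak equations gives $(\ug-h)u=0$ a.e., and $u>0$ yields $h=\ug$. The analogous argument gives uniqueness of the minimizer $\uv$ of $V\mapsto\into V u^2\,\dx$. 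Only after this uniqueness step can Burton's theorem be applied. The remainder of your proposal --- the reduction showing $\ug$ maximizes and $\uv$ minimizes the respective linear functionals, and the passage from $\tilde\phi\circ u^2$ to $\phi\circ u$ using $u>0$ --- coincides with the paper's argument and is correct.
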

\begin{proof}
    $(i)$ By the hypothesis, we have 
    \begin{equation}\label{hyp}
    \normalfont{\lmgv}=\la_1(\ug,\uv).
    \end{equation}
    Let $J:\E(g_0)\rightarrow \R$ be defined as below
    $$J(g)=\into g(x)|u(x)|^2\dx,\;\text{for}\;g\in \E(g_0).$$
    First, we show that $\ug$ is a maximizer of $J$ in $\E(g_0).$ If possible, let there exists $g\in \E(g_0)$ such that $$c:=\into g(x)|u(x)|^2\dx>\into \ug(x)|u(x)|^2\dx.$$
    Let $w=\frac{u}{c^{\frac{1}{2}}}.$ Then we have $\into g(x)|w(x)|^2\dx=1$ and
    \begin{equation*}
        \begin{aligned}
    \la_1(g,\uv)\leq\frac{\frac{\cns}{2}[w]_s^2+\into \uv(x)|w(x)|^2\dx}{\into g(x)|w(x)|^2\dx} &=\frac{1}{c}\left\{\frac{\cns}{2}[u]_s^2+\into \uv(x)|u(x)|^2\dx\right\}\\
    &< \frac{\frac{\cns}{2}[u]_s^2+\into \uv(x)|u(x)|^2\dx}{\into \ug(x)|u(x)|^2\dx}=\la_1(\ug,\uv).
        \end{aligned}
    \end{equation*}
    Therefore, using \eqref{hyp} and the definition \eqref{min_prob} of $\normalfont{\lmgv}$, we get  $$\normalfont{\lmgv}=\la_1(\ug,\uv)\leq \la_1(g,\uv) < \la_1(\ug,\uv)=\normalfont{\lmgv},$$ a contradiction. Thus $\ug$ is a maximizer of $J$ in $\E(g_0).$ Now, we prove that $\ug$ is the unique maximizer of $J$ in $\E(g_0).$ To achieve that, let there exists another maximizer $h\in \E(g_0)$ of $J.$ Then $\into h(x)|u(x)|^2\dx=\into \ug(x)|u(x)|^2\dx.$ Let $\tilde{u}$ be an eigenfunction of \eqref{Problem} associated to $\la_1(h,\uv).$ Now,
    \begin{equation*}
        \begin{aligned}
            \la_1(\ug,\uv)\leq \la_1(h,\uv) =\frac{\frac{\cns}{2}[\tilde{u}]_s^2+\into \uv(x)|\tilde{u}(x)|^2\dx}{\into h(x)|\tilde{u}(x)|^2\dx}
            &\leq \frac{\frac{\cns}{2}[u]_s^2+\into \uv(x)|u(x)|^2\dx}{\into h(x)|u(x)|^2\dx}\\
            &= \frac{\frac{\cns}{2}[u]_s^2+\into \uv(x)|u(x)|^2\dx}{\into \ug(x)|u(x)|^2\dx}=\la_1(\ug,\uv).
        \end{aligned}
    \end{equation*}
    Thus we have the equality in the above inequalities and consequently, $u$ is also a first eigenfunction of \eqref{Problem} associated to $\la_1(h,\uv).$ Therefore, the following equations are satisfied weakly: 
    \begin{align*}
        \flap u +\uv u &=\la_1(\ug,\uv) \ug u \quad\text{in} \;\Omega,\\
        \flap u +\uv u &=\la_1(\ug,\uv) h u \quad\text{in} \;\Omega.
    \end{align*}
    Subtracting the above equations, we get $\la_1(\ug,\uv) (\ug-h) u=0\;\text{a.e. in}\;\Omega.$ Since $u$ is a positive first eigenfunction, we conclude that $h=\ug$ a.e. in $\Om.$ Thus $\ug$ is the unique maximizer of $J$ in $\E(g_0).$ Hence, using \cite[Theorem 5]{Burton1987}, we get $\phi:\R\rightarrow \R$ such that $\ug =\phi \circ u\;\text{a.e. in}\;\Om.$

    $(ii)$ Define $\Tilde{J}:\E(V_0)\rightarrow \R$ as follows
    $$\Tilde{J}(V)=\into V(x)|u(x)|^2\dx,\;\text{for}\;V\in \E(V_0).$$
    Then, following similar arguments as in $(i),$ we can show that $\uv$ is the unique minimizer of $J$ in $\E(V_0).$ Therefore, again using \cite[Theorem 5]{Burton1987}, we get a decreasing function $\psi:\R\rightarrow \R$ such that $\uv =\psi \circ u\;\text{a.e. in}\;\Om.$
\end{proof}

\begin{remark}
    For the local case, a similar characterization of the minimizers as given by Proposition \ref{Repre} can be found in \cite[Theorem 3.5]{Bonder2010}.
\end{remark}

\subsection{Symmetries} 
In this subsection, we prove various symmetry results of the minimizers and the associated first eigenfunctions. First, we recall a version of maximum principles for the fractional Laplacian.

\begin{proposition}[Strong maximum principle]\label{Strong_max2}
    Let $H\subset \R^N$ be a half-space, $\Om\subset H$ be a bounded domain and $U\in L^\infty(\Om)$. Let $v\in \xsom$ satisfies (weakly)
    \begin{align*}
       \flap v &\geq U v \;\;\text{in}\;\;\Om.
   \end{align*}
    Suppose that $v \geq 0$ in $H$ and $v$ is antisymmetric with respect to the hyperplane $\pa H.$ Then 
    $$\text{either}\;v\equiv 0 \;\text{in}\;\R^N,\;\text{or}\;\essinf\limits_K\;v>0,\;\text{for all compact set}\;K\subset \Om.$$
\end{proposition}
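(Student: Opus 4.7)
The plan is to adapt the standard antisymmetric strong maximum principle for the fractional Laplacian from the nonlocal moving-plane literature. The underlying mechanism is that the combination of antisymmetry across $\pa H$ and nonnegativity on $H$ endows $\flap v$ with an extra favorable sign that compensates for the possibly negative coefficient $U$, so that the usual requirement $U\ge 0$ in the classical strong maximum principle can be dropped.

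The first step is to express, for $x\in H$, the fractional Laplacian as a single integral over $H$. Splitting the P.V.\ integral over $H$ and its reflected complement $\sigma_H(H)$, applying the change of variables $y\mapsto \sigma_H(y)$ on the latter, and using $v(\sigma_H(y))=-v(y)$, a direct computation yields
\begin{equation*}
\flap v(x)=\cns\,\mathrm{P.V.}\!\int_H\! v(x)\Bigl[\tfrac{1}{|x-y|^{N+2s}}+\tfrac{1}{|x-\sigma_H(y)|^{N+2s}}\Bigr]\dy-\cns\!\int_H\! v(y)\Bigl[\tfrac{1}{|x-y|^{N+2s}}-\tfrac{1}{|x-\sigma_H(y)|^{N+2s}}\Bigr]\dy.
\end{equation*}
The elementary geometric identity $|x-\sigma_H(y)|^2-|x-y|^2=4\,\mathrm{dist}(x,\pa H)\,\mathrm{dist}(y,\pa H)$ for $x,y\in H$ makes the kernel appearing in the second integral strictly positive a.e.\ on $H\times H$.

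I would then argue by contradiction. Assume $v\not\equiv 0$ in $\R^N$ and that some compact $K\subset\Om$ satisfies $\essinf_K v=0$. Interior regularity for weak supersolutions of the linear fractional equation $\flap v-Uv\ge 0$ with $U\in L^\infty(\Om)$ upgrades $v$ to a continuous function in $\Om$, so there exists $x_0\in K\subset\Om\subset H$ with $v(x_0)=0$. Evaluating the identity above at $x_0$, the first integral vanishes; since $v\ge 0$ on $H$ and the second kernel is strictly positive, we obtain $\flap v(x_0)\le 0$. On the other hand, the weak supersolution property combined with $v(x_0)=0$ gives $\flap v(x_0)\ge 0$. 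Thus $\flap v(x_0)=0$ and
\begin{equation*}
\int_H v(y)\Bigl[\tfrac{1}{|x_0-y|^{N+2s}}-\tfrac{1}{|x_0-\sigma_H(y)|^{N+2s}}\Bigr]\dy=0,
\end{equation*}
so the a.e.\ strict positivity of the kernel forces $v\equiv 0$ on $H$; antisymmetry then extends this to $\R^N$, contradicting $v\not\equiv 0$.

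The principal obstacle is the rigorous pointwise evaluation of $\flap v(x_0)$ for a function that is only a priori a weak supersolution in $\xsom$. I would address this in one of two ways. The first is to invoke interior $C^\alpha$ (or even merely continuity) regularity for weak supersolutions with bounded zero-order coefficient, after which the antisymmetric representation above converges absolutely at $x_0$ (no P.V.\ is needed, since the integrand has constant sign) and the pointwise identity is legitimate. The second is to replace the pointwise step by a Stampacchia-type truncation: test the weak inequality against $(\eps-v)^+$ (or a smooth approximation supported in $\{v<\eps\}\cap K$), use the representation above in its integrated weak form, and pass to the limit $\eps\to 0^+$ to conclude that $v$ vanishes on a set whose nonlocal kernel propagates zero values to all of $H$. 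Either route reduces the proof to exploiting the sign structure exposed by the antisymmetric decomposition.
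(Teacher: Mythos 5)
The paper offers no argument here: its ``proof'' is a one-line citation of \cite{Jarohs2016}, Proposition 3.6. So you are supplying a proof where the paper supplies none, and the mechanism you identify --- splitting the integral over $H$ and $\sigma_H(H)$, using antisymmetry to produce the difference kernel $|x-y|^{-N-2s}-|x-\sigma_H(y)|^{-N-2s}$, and exploiting its strict positivity on $H\times H$ via $|x-\sigma_H(y)|^2-|x-y|^2=4\,\mathrm{dist}(x,\partial H)\,\mathrm{dist}(y,\partial H)$ --- is exactly the mechanism underlying the cited result. The strategy is the right one.

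The gap is the step you yourself flag, and your first remedy does not close it. Weak supersolutions of $\flap v\geq Uv$ with $U\in L^\infty$ are in general only lower semicontinuous (in their precise representative), not continuous: the De Giorgi--Nash--Moser-type interior regularity theory gives H\"{o}lder continuity for \emph{solutions}, while for supersolutions one only gets a weak Harnack inequality and lower semicontinuity. More importantly, even continuity of $v$ at $x_0$ would not justify evaluating $\flap v(x_0)$ pointwise or deducing $\flap v(x_0)\geq U(x_0)v(x_0)$ from the weak formulation at a prescribed touching point; that requires $C^{2s+\alpha}$ regularity near $x_0$ or a viscosity/penalization argument, neither of which is available for a function that is merely an $H^s$ supersolution. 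Your second remedy --- testing against truncations concentrated on $\{v<\eps\}$ and using the antisymmetric kernel identity in integrated form --- is the correct route and is essentially how the cited proof proceeds, but as written it is only a gesture: $(\eps-v)^+$ equals $\eps$ on $H^c$ and off the support of $v$, so it is not an admissible test function without a cutoff, and the limit passage that converts the integrated identity into ``$v\equiv 0$ on $H$'' is precisely the substance of the proof and is not carried out. As it stands, the argument is a correct heuristic with the decisive analytic step missing; to make it self-contained you would need to execute the truncation argument in detail, or else simply cite \cite{Jarohs2016} as the paper does.
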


\begin{proof}
    Proof follows from \cite[Proposition 3.6]{Jarohs2016}.
\end{proof}

Now, we prove some invariance structure of the minimizers and the associated first eigenfunctions under the polarization. To be precise, we prove that the minimizers remain unchanged under the polarization if the domain is polarization invariant. This invariance property of the minimizers will be crucial to prove various symmetry results. Recall that, $\H$ is the collection of all open affine half-spaces in $\R^N.$

\begin{proposition}\label{pola_propo}
    Let $H\in \H$ with $0\in \overline{H}$ and $\Om\subset \R^N$ be a smooth, bounded domain such that $\Om_H=\Om.$ Assume that, $g_0,V_0$ are as in Theorem \ref{min_theo} and both are non-negative. Let $\ug,\uv$ be the minimizing weights as in Theorem \ref{min_theo} and $u$ be a positive first eigenfunction of \eqref{Problem} associated to $\ug$ and $\uv.$ Then the following holds:
    \begin{enumerate}[(i)]
        \item if $\si_H(\Om)\neq \Om$ and $V_0=0,$ then $u_H=u$ and $\ug_H=\ug$ in $\Om.$
        \item if $\si_H(\Om)= \Om,$ then either $u_H=u,\ug_H=\ug,\uv^H=\uv$ in $\Om,$ or $u^H=u,\ug^H=\ug,\uv_H=\uv$ in $\Om.$ 
    \end{enumerate}
\end{proposition}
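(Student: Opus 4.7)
The plan is to execute a single ``Rayleigh squeeze'' for the polarized triple $(u_H,\ug_H,\uv^H)$ and then extract invariance from an equality-case dichotomy. First observe that $u_H\in\xsom$ by Proposition~\ref{frac_norm}(iv), that $\ug_H\in \E(g_0)$ by Proposition~\ref{frac_norm}(i), and---in case (ii), where $\Om$ is symmetric with respect to $\pa H$---that $\uv^H\in \E(V_0)$ by Remark~\ref{Some_remarks}(iii). Combining the fractional P\'olya--Szeg\H o inequality $[u_H]_s \le [u]_s$ (Proposition~\ref{frac_norm}(ii)) with Hardy--Littlewood (Lemma~\ref{Hardy}(i)) applied to $(\ug,u^2)$ and the reverse Hardy--Littlewood (Lemma~\ref{Hardy}(ii)) applied to $(\uv,u^2)$, together with the elementary identity $(u^2)_H = (u_H)^2$ valid for $u\ge 0$, delivers
\begin{align*}
\lmgv \;\le\; \la_1(\ug_H,\uv^H) \;\le\;& \frac{\frac{\cns}{2}[u_H]_s^2 + \into \uv^H(x)\, u_H(x)^2\,\dx}{\into \ug_H(x)\, u_H(x)^2\,\dx} \\
\le\;& \frac{\frac{\cns}{2}[u]_s^2 + \into \uv(x)\, u(x)^2\,\dx}{\into \ug(x)\, u(x)^2\,\dx} \;=\; \lmgv,
\end{align*}
with the $V$-terms dropping out in case (i). Equality throughout forces $u_H$ to be a positive first eigenfunction of \eqref{Problem} associated with $(\ug_H,\uv^H)$, and in particular gives $[u_H]_s = [u]_s$.

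Invoking the equality case of the fractional P\'olya--Szeg\H o inequality under polarization then produces the dichotomy that, in $\R^N$, either $u_H = u$ or $u^H = u$. For case~(i), Lemma~\ref{Pola_domain}(ii) supplies $A \subset \Om\cap H$ with $|A|>0$ and $\si_H(A)\subset \Om^c\cap \overline{H}^c$. If $u^H = u$, then for $x\in A$ one would have $u(x) = u_H(\si_H(x)) = \min\{u(\si_H(x)),u(x)\}\le u(\si_H(x))=0$, contradicting $u>0$ in $\Om$. Hence $u_H = u$, and, since both $\ug$ and $\ug_H$ lie in $\E(g_0)$ and are minimizers sharing the common positive first eigenfunction $u=u_H$, the uniqueness-of-maximizer argument inside the proof of Proposition~\ref{Repre}(i), applied to both of them as maximizers of $g\mapsto\into g u^2\,\dx$ on $\E(g_0)$, forces $\ug_H = \ug$.

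For case~(ii), if $u_H = u$ the same uniqueness argument, applied simultaneously on $\E(g_0)$ to $g\mapsto\into g u^2\,\dx$ and on $\E(V_0)$ to $V\mapsto\into V u^2\,\dx$, yields $\ug_H=\ug$ and $\uv^H=\uv$. If instead $u^H=u$, then Definition~\ref{pola_def}(iii) gives $u_H = u\circ\si_H$; since $\flap$ commutes with the isometry $\si_H$, the function $u\circ\si_H$ solves \eqref{Problem} with weights $\ug\circ\si_H$ and $\uv\circ\si_H$. Matching this against the equation satisfied by $u_H$ with weights $\ug_H$ and $\uv^H$, and using $u_H>0$ in $\Om$, lets one read off $\ug_H = \ug\circ\si_H$ and $\uv^H = \uv\circ\si_H$, which upon composition with $\si_H$ is precisely $\ug^H=\ug$ and $\uv_H=\uv$.

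The main obstacle is the equality case of the fractional P\'olya--Szeg\H o inequality under polarization, which sharpens Proposition~\ref{frac_norm}(ii). The inequality itself is due to Brock--Solynin, but extracting the rigidity $u=u_H$ or $u=u^H$ a.e.\ requires a pointwise analysis of the symmetrized integrand against the strict kernel gap $|x-y|^{-N-2s} - |x-\si_H(y)|^{-N-2s}>0$ on $H\times H^c$, ruling out cancellation except on orbits where the polarization acts trivially; this rigidity statement may need to be cited or supplied separately.
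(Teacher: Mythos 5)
Your Rayleigh-quotient squeeze is exactly the paper's opening move, and the membership checks ($u_H\in\xsom$, $\ug_H\in\E(g_0)$, $\uv^H\in\E(V_0)$ when $\si_H(\Om)=\Om$) are the right ones. You diverge at the crucial dichotomy step: where you invoke the equality case of the Brock--Solynin polarization inequality for $[\,\cdot\,]_s$ to get ``$u_H=u$ or $u^H=u$'', the paper instead sets $w=u_H-u$, observes that $w\ge 0$ in $H$, is antisymmetric about $\pa H$, and satisfies $\flap w\ge -\uv w$ weakly in $\Om\cap H$ (using $\ug_H u_H\ge \ug u$ and $\uv\ge \uv^H$ there), and applies the antisymmetric strong maximum principle of Jarohs--Weth (Proposition \ref{Strong_max2}) to conclude ``$w\equiv 0$ or $w>0$ in $\Om\cap H$''. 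The two dichotomies are interchangeable downstream, and your route has the genuine advantage of not requiring $\uv\in L^\infty(\Om)$, which the maximum principle needs. The cost is that the rigidity statement you lean on appears nowhere in the paper and must be supplied. It is true and your sketch is essentially the right idea, but note that the strict kernel gap $|x-y|^{-N-2s}>|x-\si_H(y)|^{-N-2s}$ holds for $x,y$ on the \emph{same} side (i.e.\ on $H\times H$), not on $H\times H^c$ as you wrote; the correct conclusion from equality is that $(u(x)-u(\si_H(x)))(u(y)-u(\si_H(y)))\ge 0$ for a.e.\ $x,y\in H$, so that $\{u>u\circ\si_H\}\cap H$ and $\{u<u\circ\si_H\}\cap H$ cannot both have positive measure.

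One step is logically incomplete as written. In case (ii) with $u^H=u$, matching the equation satisfied by $u_H=u\circ\si_H$ with weights $(\ug_H,\uv^H)$ against the reflected equation with weights $(\ug\circ\si_H,\uv\circ\si_H)$ and dividing by $u_H>0$ yields only the single relation $\uv^H-\uv\circ\si_H=\la(\ug_H-\ug\circ\si_H)$; you cannot ``read off'' two separate identities from one equation. The repair is already in your toolkit: equality in your squeeze forces the three inequalities $[u_H]_s\le[u]_s$, $\into\uv^H u_H^2\le\into\uv u^2$ and $\into\ug_H u_H^2\ge\into\ug u^2$ to be equalities individually, and changing variables by $\si_H$ (using $u_H\circ\si_H=u$ and the symmetry of $\Om$) converts the last two into $\into\uv_H u^2=\into\uv u^2$ and $\into\ug^H u^2=\into\ug u^2$; the uniqueness of the maximizer and minimizer established inside Proposition \ref{Repre} then gives $\ug^H=\ug$ and $\uv_H=\uv$ directly. (The paper handles this subcase by yet another route, using the monotone representations $\ug=\phi\circ u$ and $\uv=\psi\circ u$ from Proposition \ref{Repre} together with Remark \ref{Some_remarks}(i).)
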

\begin{proof}
    $(i)$ For simplicity, we write $\la_1^{\text{min}}(g_0):= \la_1^{\text{min}}(g_0,0)$ and $ \la_1(\ug):=\la_1(\ug,0).$  Let $u$ be a positive first eigenfunction corresponding to $\ug.$ Now, by assumption, we have 
    \begin{equation}\label{vari1}
        \la_1^{\text{min}}(g_0)=\la_1(\ug)=\frac{\frac{\cns}{2}\uss}{\into \ug(x)|u(x)|^2\dx}.
    \end{equation}
    Let $H\in \H$ be such that $0\in \overline{H}.$ 
    We observe that $\ug_H\in \E(g_0)$ (Proposition \ref{frac_norm}-$(i)$) and $\ug_H$ satisfy (by Lemma \ref{rearrange_p}-$(i)$) all the assumptions of Theorem \ref{Eigen_exist}. Therefore, $\la_1(\ug_H)$ is attained. Indeed, we show that $\la_1(\ug_H)$ is attained at $u_H.$
    Now, using Proposition \ref{frac_norm}-$(ii)$ and the Hardy-Littlewood inequality (Lemma \ref{Hardy}-$(i)$), respectively, we get
    \begin{align}\label{hardy1}
        [u_H]_s^2\leq [u]_s^2\;;\;\into \ug_H(x)|u_H(x)|^2\dx\geq \into \ug(x)|u(x)|^2\dx.
    \end{align}
     Since $u>0$, by Proposition \ref{frac_norm}-$(iv),$ we have $u_H\in \xsom.$ Therefore, using \eqref{hardy1} in \eqref{vari1}, we obtain
    \begin{align}
        \la_1^{\text{min}}(g_0)=\frac{\frac{\cns}{2}\uss}{\into \ug(x)|u(x)|^2\dx} &\geq \frac{\frac{\cns}{2}[u_H]_s^2}{\into \ug_H(x)|u_H(x)|^2\dx}\\
        &\geq \la_1(\ug_H)\geq \la_1^{\text{min}}(g_0).
    \end{align}
    Thus we have the equality in the above inequalities. Therefore, $\la_1^{\text{min}}(g_0)=\la_1(\ug_H)$ and $u_H$ is a first eigenfunction associated to $\la_1(\ug_H).$ Hence the following equations hold weakly in $\Om$:
    \begin{align}
        \flap u_H  &=\la_1^{\text{min}}(g_0) \ug_H u_H \;\;\text{in} \;\Omega,\label{eqh}\\
        \flap u &=\la_1^{\text{min}}(g_0) \ug u \quad\quad\text{in} \;\Omega.\label{eqq}
    \end{align}
    Define $w(x)=u_H(x)-u(x),$ for $x\in \R^N.$ Then using Definition \ref{pola_def}-$(ii)$, we get $w(x)\geq 0,$ for $x\in \Om\cap H.$ Moreover, if $x\in \Om^c\cap H,$ Lemma \ref{Pola_domain}-$(i)$ yields that $\si_H(x)\in \Om^c\cap H^c$. Therefore, $u(\si_H(x))=u(x)=0,$ for $x\in \Om^c\cap H.$ Hence $w(x)=0,$ for $x\in \Om^c\cap H.$ Thus $$w\geq 0\; \text{in} \;H.$$ Furthermore, using the definition of polarization (Definition \ref{pola_def}), it is easy to observe that, $w$ is antisymmetric with respect to the hyperplane $\pa H.$ Now, from \eqref{eqh} and \eqref{eqq}, $w$ satisfies the below equation weakly
    \begin{equation}\label{weq}
        \flap w  =\la_1^{\text{min}}(g_0) (\ug_H u_H-\ug u) \;\;\text{in} \;\Omega\cap H.
    \end{equation}
     Observe that, $\ug_H\geq 0$ in $\Om$ as $\ug\geq 0$ in $\Om$. Therefore, $\ug_H u_H-\ug u\geq 0$ in $\Om\cap H.$ Hence from \eqref{weq}, we get
     \begin{equation}\label{weqq}
        \flap w\geq 0\;\;\text{in} \;\Omega\cap H.
    \end{equation}
     Since both $u,u_H\in \xsom$, it follows that $w\in \xsom.$ Thus $w$ is antisymmetric, $w\geq 0$ in $H$ and $w$ satisfies \eqref{weqq}. Therefore, applying the Strong maximum principle (Proposition \ref{Strong_max2}) in $\Om\cap H$, we get either $w>0$ in $\Om \cap H$ or $w\equiv 0$ in $\R^N.$ Equivalently, $$\text{either}\;u_H>u\;\text{in}\; \Om\cap H,\; \text{or}\; u_H\equiv u\;\text{in}\; \R^N.$$ 
     Since $\si_H(\Om)\neq \Om,$ using Lemma \ref{Pola_domain}-$(ii),$ we get $A\subset \Om\cap H$ such that $|A|>0$ and $\si_H(A)\subset \Om^c\cap \overline{H}^c.$ Therefore, $u(\si_H(x))=0,$ for all $x\in A.$ Moreover, $u(x)>0,$ for $x\in A$. Hence $u_H(x)=u(x),$ for all $x\in A.$ Thus we must have $u_H\equiv u$ in $\R^N$ and hence $$u_H= u\;\text{in}\; \Om.$$ 
     Substituting $u_H=u$ in \eqref{eqh}, \eqref{eqq} and subtracting, we conclude $$\ug_H=\ug\;\text{in}\;\Om.$$

     $(ii)$ By the assumption, we have 
     \begin{equation}\label{lmv}
         \la_1^{\text{min}}(g_0,V_0)=\la_1(\ug,\uv)=\frac{\frac{\cns}{2}\uss+\into \uv(x)|u(x)|^2\dx}{\into \ug(x)|u(x)|^2\dx}.
     \end{equation}
     Let $H\in \H$ be as given in the statement and $\si_H(\Om)=\Om.$ Thus $\Om$ is symmetric with respect to $\pa H$ and hence using Remark \ref{Some_remarks}-$(iii)$, we get $\ug_H\in \E(g_0)$ and  $\uv^H\in \E(V_0).$ Besides, applying Theorem \ref{Eigen_exist}, it follows that $\la_1(\ug_H,\uv^H)$ is achieved. We claim that $\la_1(\ug_H,\uv^H)$ is achieved at $u_H.$ Now, from the Hardy-Littlewood and reverse Hardy-Littlewood inequality (Lemma \ref{Hardy}-$(i)$ and $(ii)$), we have 
     \begin{equation}\label{Hard}
         \into \ug_H(x)|u_H(x)|^2\dx\geq \into \ug(x)|u(x)|^2\dx;\;\;\into \uv^H(x)|u_H(x)|^2\dx \leq \into \uv(x)|u(x)|^2\dx.
     \end{equation}
     Furthermore, Proposition \ref{frac_norm}-$(ii)$ yields $[u_H]_s^2\leq [u]_s^2.$ Therefore, using \eqref{Hard} in \eqref{lmv}, we obtain
     \begin{align}\label{comv}
         \la_1^{\text{min}}(g_0,V_0)=\frac{\frac{\cns}{2}\uss+\into \uv(x)|u(x)|^2\dx}{\into \ug(x)|u(x)|^2\dx} &\geq \frac{\frac{\cns}{2}[u_H]_s^2+\into \uv^H(x)|u_H(x)|^2\dx}{\into \ug_H(x)|u_H(x)|^2\dx}\\
         &\geq \la_1(\ug_H,\uv^H)\geq \la_1^{\text{min}}(g_0,V_0).
     \end{align}
    As a consequence, we have equality everywhere, in particular, $\la_1^{\text{min}}(g_0,V_0)=\la_1(\ug_H,\uv^H)$ and $u_H$ is an eigenfunction associated to $\la_1(\ug_H,\uv^H).$ Therefore, the following equations hold weakly in $\Om:$
    \begin{align}
        \flap u_H+\uv^H u_H &=\la_1^{\text{min}}(g_0,V_0) \ug_H u_H \;\;\text{in} \;\Omega,\label{eqhh}\\
        \flap u+\uv u&=\la_1^{\text{min}}(g_0,V_0) \ug u \quad\quad\text{in} \;\Omega.\label{eqqq}
    \end{align}
    Let $w(x):=u_H(x)-u(x),$ for $x\in \R^N.$ Then $w\in \xsom$ (as $u,u_H\in \xsom$) and $w$ solves the following equation weakly:
    \begin{equation}\label{difh}
        \flap w+ (\uv^H u_H-\uv u)=\la_1^{\text{min}}(g_0,V_0) (\ug_H u_H-\ug u)\;\text{in}\;\Om\cap H.
    \end{equation}
    We observe that $\uv\geq \uv^H$ in $\Om\cap H.$ Thus we arrive to
    \begin{align}
        \flap w+ \uv w\geq \flap w+ (\uv^H u_H-\uv u) &=\la_1^{\text{min}}(g_0,V_0) (\ug_H u_H-\ug u)\geq 0\;\text{in}\; \Om\cap H,\nonumber\\
        \text{i.e.,}\;\flap w & \geq (-\uv) w\;\text{in}\;\Om\cap H.
    \end{align}
    By the assumption, $V_0\in L^\infty(\Om)$ and as a result $\uv\in L^\infty(\Om).$ Furthermore, as observed in the proof of $(i)$, $w\geq 0$ in $H$ and $w$ is antisymmetric with respect to $\pa H.$ Therefore, appyling the Strong maximum principle (Proposition \ref{Strong_max2}) in $\Om\cap H,$ we conclude either $w\equiv 0$ in $\Om\cap H$ or $\essinf\limits_K\;v>0,$ for all compact $K\subset \Om\cap H,$ i.e.,
    \begin{equation}\label{wh}
        \text{either}\;u_H=u\;\text{in}\;\Om\cap H,\;\text{or}\;u_H>u\;\text{in}\;\Om\cap H.
    \end{equation}
    Now, we analyse both the cases separately.\\
    \underline{\it Case-I}: Suppose $u_H=u$ in $\Om\cap H.$ Thus $u\geq u\circ \si_H$ in $\Om\cap H$. Hence using Remark \ref{Some_remarks}-$(i)$, we have
    $$u_H=u\; \text{in}\;\Om.$$ 
    Next, applying the Proposition \ref{Repre}-$(i)$, we get an increasing function $\phi: \R\rightarrow\R$ such that $\ug=\phi\circ u$ a.e. in $\Om.$ Therefore,
    \begin{equation*}
        \ug\circ \si_H=(\phi \circ u)\circ \si_H\leq \phi \circ u=\ug\;\text{in}\;\Om\cap H.
    \end{equation*}
    i.e., $\ug\circ \si_H\leq \ug$ in $\Om\cap H,$ which gives that $$\ug_H=\ug\; \text{in}\;\Om.$$ Similarly, applying Proposition \ref{Repre}-$(ii)$, we get a decreasing function $\psi:\R\rightarrow\R$ such that $\uv=\psi\circ u$ a.e. in $\Om.$ Now, 
    \begin{equation*}
        \uv\circ \si_H=(\psi \circ u)\circ \si_H\geq \psi\circ u=\uv\;\text{in}\;\Om\cap H.
    \end{equation*}
    i.e., $\uv\circ \si_H\geq \uv$ in $\Om\cap H.$ Thus using Remark \ref{Some_remarks}-$(i),$ we get
    $$\uv^H=\uv\; \text{in}\;\Om.$$
    \underline{\it Case-II}: Let $u_H>u$ in $\Om\cap H.$ This implies $u<u\circ \si_H$ in $\Om\cap H$ and hence (again using Remark \ref{Some_remarks}-$(i)$)
    $$u^H=u\;\text{in}\;\Om.$$
    Proceeding in a similar manner as in the previous case, we get
    $$\ug^H=\ug\;\text{in}\;\Om\;\text{and}\;\uv_H=\uv\;\text{in}\;\Om.$$
    Hence combining both the cases, we conclude that either $u_H=u,\ug_H=\ug,\uv^H=\uv$ in $\Om,$ or $u^H=u,\ug^H=\ug,\uv_H=\uv$ in $\Om.$ This completes the proof.
\end{proof}

Now, we are in a position to prove the Steiner symmetry of the minimizing weights and the corresponding first eigenfunctions of \eqref{Problem} on Steiner symmetric domains.

\begin{proof}[Proof of Theorem \ref{steiner_theo}]
    Let $\ug$ be a minimizing weight and $u$ be a positive first eigenfunction of \eqref{Problem} associated to $\ug.$ Without loss of generality (since fractional Laplacian is invariant under isometries), we can assume that $\Om$ is Steiner symmetric with respect to the hyperplane $\{x_N=0\}$, i.e., we can consider  
    $$H=\{x=(x',x_N)\in \R^N: x_N>0\}.$$
    Recall that, $\H$ is the collection of all open affine half-spaces in $\R^N.$ Now, we define the following class of half-spaces in $\R^N$:
    $$\H_*=\{P\in \H: 0\in \overline{P}, \pa H\subset P,\;\text{and}\;\pa P\;\text{is parallel to}\;\pa H\}.$$
    Since $\si_H(\Om)=\Om$ (Remark \ref{RemS}-$(i)$), we must have $\si_P(\Om)\neq \Om,$ for all $P\in \H_*.$ Furthermore, applying Proposition \ref{Steiner_char}-$(i),$ we get $\Om_P=\Om,$ for all $P\in \H_*.$ Therefore, using Proposition \ref{pola_propo}-$(i),$ we obtain 
$$u_P=u,\;\text{and}\;\ug_P=\ug\;\text{in}\;\Om,\;\text{for all}\;P\in \H_*.$$
    Now, the conclusion follows from Proposition \ref{Steiner_char}-$(ii).$ This finishes the proof.
\end{proof}

Next, as an immediate consequence of the Theorem \ref{steiner_theo}, we prove the radial symmetry of the minimizing weights and the first eigenfunctions in a ball. 
\begin{proof}[Proof of Corollary \ref{rad_theo}]
    Let $\Om=B_r$ be the ball of radius $r$ centered at the origin. Recall that (Section \ref{fol_sub}),
    $$\widehat{\H}_0=\{H\in \H: 0\in \pa H\}.$$
    It is easy to observe that $\Om$ is Steiner symmetric with respect to $\pa H,$ for each $H\in \widehat{\H}_0$ (Remark \ref{RemS}). Therefore, Theorem \ref{steiner_theo} infers that both $u$ and $\ug$ are Steiner symmetric with respect to $\pa H,$ for each $H\in \widehat{\H}_0.$ Consequently, $u$ and $\ug$ are symmetric with respect to the hyperplane $\pa H,$ for any $H\in \widehat{\H}_0.$ Hence, the radiality follows.  
\end{proof}

\begin{remark}
In the local case, various symmetry, such as, radial symmetry, Steiner symmetry, etc., of the minimizing weights and the eigenfunctions have been obtained in \cite{CuccuF, Prajapat, Chanillo, Anedda} under different assumptions on $g_0.$ The proofs of these results make use of either Schwarz or Steiner symmetrization and a rigidity results regarding the equality case of the well-known P\'{o}lya-Szeg\"{o} inequality (see \cite[Theorem 1.1]{Brothers} for Schwarz symmetrization and \cite[Theorem 2.2]{Cianchi} for Steiner symmetrization). However, it is worth mentioning that our approach of proofs is quite different from those available in the referred articles.   
\end{remark}

In the following, we prove the foliated Schwarz symmetry of the minimizing weights and the first eigenfunctions on annular domains.

\begin{proof}[Proof of Theorem \ref{foliated_theo}]
    Let $\B_t$ be the annular domain  defined by \eqref{annulus}, i.e., $$\B_t=B_R(0)\setminus \overline{B_r(te_1)}, \;\text{where}\;0\leq t<R-r.$$ 
    Also, let $\ug,\uv$ be the minimizing weights in \eqref{min_prob}.
    
    $(i)$ \underline{\textit{Concentric case}}: Let $t=0.$ Also, let  $\widehat{\H}_0$ be as mentioned in the proof of Corollary \ref{rad_theo}. For $H\in \widehat{\H}_0,$ we have $\si_H(\B_0)=\B_0$ and $(\B_0)_H=\B_0.$ Thus applying Proposition \ref{pola_propo}-$(ii)$, we get 
$$\text{either}\;u_H=u\;\text{or}\;u^H=u\;\text{in}\;\B_0.$$
Therefore, we use Proposition \ref{Foliated_char}-$(i)$ to get $\ga\in \S^{N-1}$ such that $u$ is foliated Schwarz symmetric in $\B_0$ with respect to $\ga.$ Hence from Proposition \ref{Foliated_char}-$(ii),$ we must have 
$$u_H=u\;\text{in}\;\B_0,\;\text{for all}\;H\in \widehat{\H}_0(\ga).$$
Now, following the same method as in the proof (\textit{Case-I}) of Proposition \ref{pola_propo}-$(ii)$ and using the characterization of $\ug$ and $\uv$ in terms of $u$ (Proposition \ref{Repre}), we conclude that
\begin{equation}\label{ee}
\ug_H=\ug\;\text{and}\;\uv^H=\uv\;\text{in}\;\B_0,\;\text{for all}\;H\in \widehat{\H}_0(\ga),
\end{equation}
where $\widehat{\H}_0(\ga)=\{H\in\widehat{\H}_0 :\ga\in H\}.$ Thus Proposition \ref{Foliated_char}-$(ii)$ ensures that $\ug$ is foliated Schwarz symmetric in $\B_0$ with respect to $\ga.$ Next, from the definition of dual-polarization (Definition \ref{pola_def}-$(iii)$), it follows that $\uv^H=\uv_{\overline{H}^c},$ for any half-space $H\subset \R^N.$ Therefore, \eqref{ee} yields 
$\uv_{\overline{H}^c}=\uv\;\text{in}\;\B_0,\;\text{for all}\;H\in \widehat{\H}_0(\ga),$ and it is equivalent to the following: 
$$\uv_H=\uv\;\text{in}\;\B_0,\;\text{for all}\;H\in \widehat{\H}_0(-\ga).$$
Therefore, $\uv$ is foliated Schwarz symmetric in $\B_0$ with respect to $-\ga$ (by Proposition \ref{Foliated_char}-$(ii)$).

$(ii)$ \underline{\textit{Non-concentric case}}: Let $t>0$. By the hypothesis, $V_0=0.$
Observe that, for $t>0,$ we have $\si_H(\B_t)\neq \B_t$ and $(\B_t)_H=\B_t,$ for all $H\in \widehat{\H}_0(-e_1).$ Therefore, we apply Proposition \ref{pola_propo}-$(i)$ to get 
$$u_H=u\;\text{and}\;\ug_H=\ug\;\text{in}\;\B_t,\;\text{for all}\;H\in \widehat{\H}_0(-e_1).$$
Now, using the characterization of the foliated Schwarz symmetry (Proposition \ref{Foliated_char}-$(ii)$), we conclude that $u$ and $\ug$ is foliated Schwarz symmetric in $\B_t$ with respect to $-e_1$. The proof is completed.
\end{proof}

\begin{corollary}\label{compo}
    Let $\B_0$ be as in Theorem \ref{foliated_theo}, $g_0=1$ and $V_0=\al\chi_D,$ for some $\al>0$ and $D\subset \B_0.$ Then $\uv=\al \chi_{\underline{D}}$, for some $\underline{D}\subset \B_0$ with $|\underline{D}|=|D|$. Furthermore, $\underline{D}$ is symmetric with respect to the axis $\ga\R,$ for some $\ga\in \S^{N-1}.$ 
\end{corollary}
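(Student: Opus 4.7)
My plan is to read the corollary directly off Theorem \ref{foliated_theo}-(i), after exploiting the two-level structure of $V_0=\al\chi_D$. First I would observe that the distribution function of $V_0$ takes only the values $|D|$ (on $(0,\al)$) and $0$ (on $[\al,\infty)$), so any rearrangement $\uv\in\E(V_0)$ is a.e.\ two-valued with the same level-set measures. This forces $\uv=\al\chi_{\underline D}$ for some measurable $\underline D\subset\B_0$ with $|\underline D|=|D|$, which already delivers the first half of the statement.

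Next I would verify that $g_0\equiv 1$ and $V_0=\al\chi_D$ meet the hypotheses of Theorem \ref{foliated_theo}: both lie in $L^q(\B_0)$ for every $q>\tfrac{N}{2s}$ because $\B_0$ is bounded, both are nonnegative, and $V_0^-\equiv 0$ makes (A2) trivial. Applying Theorem \ref{foliated_theo}-(i) (the case $t=0$) then gives some $\be\in\S^{N-1}$ such that $\uv$ is foliated Schwarz symmetric in $\B_0$ with respect to $-\be$. Setting $\ga=-\be$ and unpacking Definition \ref{Foliated_sz}, we may write
\[
    \uv(x)=h\bigl(|x|,\arccos(x\cdot\ga/|x|)\bigr),
\]
with $h$ decreasing in the polar angle; in particular $\uv$ depends on $x$ only through $|x|$ and the angle between $x$ and $\ga$. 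Hence its $\al$-level set $\underline D=\{x\in\B_0:\uv(x)=\al\}$ is invariant under every rotation of $\R^N$ that fixes the axis $\ga\R$, which is precisely the axial symmetry claimed.

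I do not foresee a genuine obstacle: the nontrivial content is already packaged in the foliated Schwarz symmetry proved in Theorem \ref{foliated_theo}-(i), and the remaining work is the elementary bookkeeping that rearrangements of $\al\chi_D$ are again indicator-type functions, together with the easy implication ``foliated Schwarz symmetric $\Rightarrow$ axially symmetric about $\ga\R$.''
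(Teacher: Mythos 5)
Your proposal is correct and follows essentially the same route as the paper: identify rearrangements of $\al\chi_D$ as functions of the form $\al\chi_A$ with $|A|=|D|$ via the level-set definition \eqref{Rearrange_def}, invoke Theorem \ref{foliated_theo}-$(i)$, and then use the fact that foliated Schwarz symmetry with respect to a direction forces axial symmetry about that direction's axis. The only cosmetic difference is your explicit unpacking of Definition \ref{Foliated_sz} for the last step, which the paper treats as an immediate consequence noted after Definition \ref{Foliated_symmetry}.
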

\begin{proof}
   By assumption, $V_0=\al\chi_D,$ for some $\al>0$ and $D\subset \B_0.$ Now using \eqref{Rearrange_def}, it is easy to observe that if $V\in \E(V_0),$ then $V=\al\chi_A,$ for some $A\subset \B_0$ with $|A|=|D|.$ Since $\uv\in \E(V_0),$ we obtain $\uv=\al \chi_{\underline{D}}$, for some $\underline{D}\subset \B_0$ with $|\underline{D}|=|D|$. Moreover, using Theorem \ref{foliated_theo}-$(i),$ we get $\ga\in\S^{N-1}$ such that $\uv$ is foliated Schwarz symmetric in $\B_0$ with respect to $-\ga.$  As a consequence, $\uv$ and equivalently, the set $\underline{D}$ is symmetric with respect to the axis $\ga \R.$ 
\end{proof}

\begin{remark}\label{asym}
    As indicated earlier, the minimizing weights in \eqref{min_prob} and the corresponding eigenfunction of \eqref{Problem} may not inherit the full symmetry of the underlying domain in general. More explicitly, in \cite[Theorem 1.4]{LeeLee}, for $g_0=1$ and $V_0=\al\chi_D$ $(\al>0\;\text{and} \;D\subset \B_0),$ the authors constructed an annular domain $\B_0=B_{R+1}(0)\setminus \overline{B_R(0)},$ for sufficiently large $R,$ where the minimizing weight $\uv=\al \chi_{\underline{D}}$ (for some $\underline{D}\subset \B_0$ with $|\underline{D}|=|D|$) does not satisfy the rotational symmetry as $\B_0$. Nevertheless, Corollary \ref{compo} implies that the optimal set $\underline{D}$ is symmetric with respect to the axis $\ga \R.$ Moreover, it is necessary to mention that such axial symmetry holds irrespective of the size of $R,$ which is indeed a novelty of Theorem \ref{foliated_theo}.     
\end{remark}

% \noi\textbf{Conflict of Interest.}
% The author declares that there is no conflict of interest related to the content of this article.

\noi\textbf{Acknowledgements.} The author is grateful for the support provided by TIFR Centre for Applicable Mathematics.

\bibliographystyle{abbrvurl}

\bibliography{Reference}
\end{document}